\newcommand{\IFTHENELSE}[3]{%
    \STATE\algorithmicif\ {#1} \algorithmicthen\ {#2} \algorithmicelse\ {#3} \algorithmicend\ \algorithmicif%
}
\DeclareMathOperator*{\argmin}{arg\,min}
\newcommand{\ignore}[1]{}
\newenvironment{breakablealgorithm}
  {
   \begin{center}
     \refstepcounter{algorithm}
     \hrule height.8pt depth0pt \kern2pt
     \renewcommand{\caption}[2][\relax]{
       {\raggedright\textbf{\ALG@name~\thealgorithm} ##2\par}%
       \ifx\relax##1\relax 
         \addcontentsline{loa}{algorithm}{\protect\numberline{\thealgorithm}##2}%
       \else 
         \addcontentsline{loa}{algorithm}{\protect\numberline{\thealgorithm}##1}%
       \fi
       \kern2pt\hrule\kern2pt
     }
  }{
     \kern2pt\hrule\relax
   \end{center}
  }
\begin{document}

\title{A Backward Stable Algorithm for Computing the CS Decomposition via the Polar Decomposition}

\author{Evan S. Gawlik\thanks{Department of Mathematics, University of California, San Diego (\email{egawlik@ucsd.edu})} \and Yuji Nakatsukasa\thanks{National Institute of Informatics, 2-1-2 Hitotsubashi, Chiyoda-ku, Tokyo 101-8430, Japan (\email{nakatsukasa@nii.ac.jp})} \and Brian D. Sutton\thanks{Department of Mathematics, Randolph-Macon College, Ashland, VA 23005 (\email{bsutton@rmc.edu})}}

\date{}

\headers{Backward Stable Algorithm for the CS Decomposition}{E. S. Gawlik, Y. Nakatsukasa, and B. D. Sutton}

\maketitle

\begin{abstract}
We introduce a backward stable algorithm for computing the CS decomposition of a partitioned $2n \times n$ matrix with orthonormal columns, or a rank-deficient partial isometry. 
 The algorithm computes two $n \times n$ polar decompositions (which can be carried out in parallel) followed by an eigendecomposition of a judiciously crafted $n \times n$ Hermitian matrix.  We prove that the algorithm is backward stable whenever the aforementioned decompositions are computed in a backward stable way. 
 Since the polar decomposition and the symmetric eigendecomposition are highly amenable to parallelization, the algorithm inherits this feature.  We illustrate this fact by invoking recently developed algorithms for the polar decomposition and symmetric eigendecomposition that leverage Zolotarev's best rational approximations of the sign function. Numerical examples demonstrate that the resulting algorithm for computing the CS decomposition enjoys excellent numerical stability.
\end{abstract}

\begin{keywords}
CS decomposition, polar decomposition, eigendecomposition, Zolotarev, generalized singular value decomposition, simultaneous diagonalization, backward stability
\end{keywords}
\begin{AMS}
65F30, 15A23, 65F15, 15A18, 65G50
\end{AMS}

\section{Introduction} \label{sec:intro}

The CS decomposition~\cite[Section 2.5.4]{golub2012matrix} allows any partitioned $2n \times n$ matrix
\[
A = \begin{pmatrix} A_1 \\ A_2 \end{pmatrix}, \quad 
A_1, \, A_2 \in \mathbb{C}^{n\times n}
\]
with orthonormal columns to be factorized as
\[
A = \begin{pmatrix} U_1 & 0 \\ 0 & U_2 \end{pmatrix} \begin{pmatrix} C \\ S \end{pmatrix} V_1^*,
\]
where 
 $U_1,U_2,V_1 \in \mathbb{C}^{n \times n}$ are unitary matrices
 and $C,S \in \mathbb{C}^{n \times n}$ are diagonal matrices with nonnegative entries satisfying $C^2+S^2=I$.  In other words, $A_1 = U_1 C V_1^*$ and $A_2 = U_2 S V_1^*$ have highly correlated singular value decompositions: they share the same right singular vectors, and the singular values of $A_1$ and $A_2$ are the cosines and sines, respectively, of angles $0 \le \theta_1 \le \theta_2 \le \dots \le \theta_n \le \tfrac{\pi}{2}$.  An analogous factorization holds for $(m_1+m_2)\times n$ $(m_1,m_2\geq n)$ matrices with orthonormal columns~\cite[Section 2.5.4]{golub2012matrix}.

By writing
\begin{equation} \label{A1decomp}
A_1 = (U_1 V_1^*) (V_1 C V_1^*)
\end{equation}
and
\begin{equation} \label{A2decomp}
A_2 = (U_2 V_1^*) (V_1 S V_1^*),
\end{equation}
another perspective emerges.  Since $W_1 := U_1 V_1^*$ and $W_2 := U_2 V_1^*$ are unitary and $H_1 := V_1 C V_1^*$ and $H_2 := V_1 S V_1^*$ are Hermitian positive semidefinite, the \emph{polar decompositions} $A_i = W_i H_i$, $i=1,2$, are highly correlated.  Specifically, the matrices $H_1$ and $H_2$ are simultaneously diagonalizable with eigenvalues $\{\cos \theta_i\}_{i=1}^n$ and $\{\sin \theta_i\}_{i=1}^n$, respectively.  

In this paper, we leverage the preceding observation to construct a backward stable algorithm for the CS decomposition.  The algorithm computes two polar decompositions $A_1 = W_1 H_1$ and $A_2 = W_2 H_2$ followed by an eigendecomposition of a judiciously crafted Hermitian matrix $B \in \mathbb{C}^{n \times n}$.  As it turns out, the stability of the algorithm depends critically on the choice of $B$. Obvious candidates, such as $H_1$, $H_2$, or even $H_1+H_2$, lead to unstable algorithms.  The choice $B=H_2-H_1$, on the other hand, leads to a backward stable algorithm, assuming that the two polar decompositions and the eigendecomposition are computed in a backward stable way.  A central aim of this paper is to prove this assertion.

One of the hallmarks of this approach is its simplicity: it is built entirely from a pair of standard matrix decompositions for which a wealth of highly optimized algorithms are available.  
In particular, one can compute the CS decomposition in a parallel, communication-efficient way by invoking off-the-shelf algorithms for the polar decomposition and symmetric eigendecomposition.
We illustrate this fact by invoking recently developed algorithms that leverage Zolotarev's best rational approximations of the sign function~\cite{nakatsukasa2016computing}.

With a small modification, our algorithm enables the computation of a generalization of the CS decomposition that is applicable to \emph{partial isometries}.  Recall that $A \in \mathbb{C}^{m \times n}$ is a partial isometry if $AA^*A=A$; equivalently, every singular value of $A$ is either 1 or 0.  We introduce this generalized CS decomposition in Section~\ref{sec:prelim} and prove backward stability of the algorithm in this generalized setting.

Stably computing the CS decomposition of a matrix with orthonormal columns is a notoriously delicate task.  The difficulties stem from the fact that the columns of $V_1$ serve simultaneously as the right singular vectors of $A_1$ \emph{and} $A_2$.  In the presence of roundoff errors, choosing the columns of $V_1$ to satisfy both roles simultaneously is nontrivial, particularly when $\{\theta_i\}_{i=1}^n$ contains clusters of nearby angles.  
Early algorithms for the CS decomposition include~\cite{stewart1982computing,van1985computing}; both algorithms obtain $V_1$ by computing an SVD of either $A_1$ or $A_2$ and then modifying it.  Recent algorithms have focused on simultaneously diagonalizing $A_1$ and $A_2$, using either simultaneous QR iteration or a divide-and-conquer strategy after $A_1$ and $A_2$ have been simultaneously reduced to bidiagonal form~\cite{sutton2009computing,sutton2012stable,sutton2013divide}.  There are also general-purpose algorithms for computing the generalized singular value decomposition~\cite{bai1993computing}, of which the CS decomposition is a special case.

Applications of the CS decomposition are widespread.  It can be used to help compute principal angles between subspaces~\cite[Section 12.4]{golub2012matrix}, the logarithm on the Grassmannian manifold~\cite{gallivan2003efficient}, the generalized singular value decomposition~\cite[Section 8.7.3]{golub2012matrix}, and decompositions of quantum circuits~\cite{tucci1999rudimentary}. Good overviews of these and other applications are given in~\cite{paige1994history,bai1992csd}.

\paragraph{Organization}
This paper is organized as follows.  After introducing the generalized CS decomposition for partial isometries in Section~\ref{sec:prelim}, we detail in Section~\ref{sec:algorithm} our proposed algorithm for computing it via two polar decompositions and a symmetric eigendecomposition.  We prove backward stability of the algorithm in Section~\ref{sec:backwardstability} under mild hypotheses on the eigensolver and the algorithm used to compute the polar decomposition.  In Section~\ref{sec:polareig}, we highlight a specific pair of algorithms -- Zolo-pd and Zolo-eig -- for computing the polar decomposition and symmetric eigendecomposition.  
The resulting algorithm for the CS decomposition -- Zolo-csd -- is tested in Section~\ref{sec:numerical} on several numerical examples.

\section{Preliminaries} \label{sec:prelim}

In this section, we introduce a generalization of the CS decomposition that is applicable to partial isometries.  We then discuss a few issues concerning partial isometries in finite precision arithmetic.

To begin, recall that every matrix $A \in \mathbb{C}^{m \times n}$ admits a unique \emph{canonical polar decomposition} $A=UH$, where $U \in \mathbb{C}^{m \times n}$ is a partial isometry, $H$ is Hermitian positive semidefinite, and $\mathrm{range}(U^*)=\mathrm{range}(H)$~\cite[Theorem 8.3]{higham2008functions}.  If $A=P\Sigma Q^*$ is the compact singular value decomposition of $A$, i.e. $P \in \mathbb{C}^{m \times r}$ and $Q \in \mathbb{C}^{n \times r}$ have orthonormal columns and $\Sigma \in \mathbb{C}^{r \times r}$ is diagonal with positive diagonal entries (where $r=\mathrm{rank}(A)$), then $U = PQ^*$ and $H=Q\Sigma Q^* = (A^*A)^{1/2}$.

If $m \ge n$, then $A \in \mathbb{C}^{m \times n}$ also admits a \emph{polar decomposition} $A=WH$, where $W \in \mathbb{C}^{m \times n}$ has orthonormal columns and $H$ is Hermitian positive semidefinite.  In general, the canonical polar decomposition $A=UH$ differs from the polar decomposition $A=WH$, which is only defined for $m \ge n$.  When $m \ge n$, the two $H$'s coincide, and if $A$ has rank $n$, then $W$ is uniquely determined, $W=U$, and $H$ is positive definite.

We make use of both decompositions in this paper; the latter is used in the following theorem.


\begin{theorem} \label{thm:csdpartial}
Let $A \in \mathbb{C}^{m \times n}$ ($m \ge 2n$) be a partial isometry of rank $r$. For any partition
\[
A = \begin{pmatrix} A_1 \\ A_2 \end{pmatrix}, \quad A_1 \in \mathbb{C}^{m_1 \times n}, \, A_2 \in \mathbb{C}^{m_2 \times n}, \, m_1,m_2 \ge n, \, m_1 + m_2 = m,
\]
there exist $U_1 \in \mathbb{C}^{m_1 \times n}$, $U_2 \in \mathbb{C}^{m_2 \times n}$, and $C,S,V_1 \in \mathbb{C}^{n \times n}$ such that $U_1$, $U_2$, and $V_1$ have orthonormal columns, $C$ and $S$ are diagonal with nonnegative entries, $C^2+S^2 = \left(\begin{smallmatrix}I_r & 0 \\ 0 & 0_{n-r} \end{smallmatrix}\right)$, and
\begin{equation}
  \label{eq:partCSdef}
A = \begin{pmatrix} U_1 & 0 \\ 0 & U_2 \end{pmatrix} \begin{pmatrix} C \\ S \end{pmatrix} V_1^*.  
\end{equation}
\end{theorem}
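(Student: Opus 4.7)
The plan is to reduce the statement to the classical CS decomposition for matrices with orthonormal columns (the $(m_1+m_2)\times n$ version with $m_1,m_2\ge n$ mentioned at the end of Section~\ref{sec:intro}). The obstruction to applying that result directly is that a partial isometry of rank $r<n$ has a nontrivial null space, so the columns of $A$ are not orthonormal. The first step is therefore to rotate away the null space: since $A^*A$ is an orthogonal projector of rank $r$, choose a unitary $V_0\in\mathbb{C}^{n\times n}$ whose first $r$ columns span $\mathrm{range}(A^*)$ and whose last $n-r$ columns span $\ker(A)$. Then
\[
AV_0 = \begin{pmatrix} \hat{A} & 0_{m\times(n-r)} \end{pmatrix},
\]
where $\hat{A}\in\mathbb{C}^{m\times r}$ has orthonormal columns (because $V_0^*A^*AV_0 = \mathrm{diag}(I_r,0)$).

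Next I would partition $\hat{A} = \bigl(\hat{A}_1^{\,T}\ \hat{A}_2^{\,T}\bigr)^T$ conformally with the partition of $A$, so $\hat{A}_i\in\mathbb{C}^{m_i\times r}$. Since $m_1,m_2\ge n\ge r$, the classical CS decomposition applies to $\hat{A}$, producing unitary matrices $\hat{U}_1\in\mathbb{C}^{m_1\times r}$, $\hat{U}_2\in\mathbb{C}^{m_2\times r}$ (with orthonormal columns), a unitary $\hat{V}_1\in\mathbb{C}^{r\times r}$, and diagonal $\hat{C},\hat{S}\in\mathbb{C}^{r\times r}$ with $\hat{C}^2+\hat{S}^2=I_r$, satisfying
\[
\hat{A} = \begin{pmatrix} \hat{U}_1 & 0 \\ 0 & \hat{U}_2 \end{pmatrix} \begin{pmatrix} \hat{C} \\ \hat{S} \end{pmatrix} \hat{V}_1^*.
\]

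To assemble the desired decomposition of $A$ itself, I would pad everything up to size $n$. Extend each $\hat{U}_i$ to $U_i\in\mathbb{C}^{m_i\times n}$ by appending any $n-r$ columns that orthonormally complete $\hat{U}_i$ within $\mathbb{C}^{m_i}$. Set
\[
C = \begin{pmatrix} \hat{C} & 0 \\ 0 & 0_{n-r} \end{pmatrix},\quad S = \begin{pmatrix} \hat{S} & 0 \\ 0 & 0_{n-r} \end{pmatrix},\quad V_1 = V_0\begin{pmatrix} \hat{V}_1 & 0 \\ 0 & I_{n-r} \end{pmatrix}.
\]
By construction, $C$ and $S$ are diagonal with nonnegative entries, $C^2+S^2=\mathrm{diag}(I_r,0_{n-r})$, and $V_1$ is unitary. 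A direct block multiplication then gives
\[
\begin{pmatrix} U_1 & 0 \\ 0 & U_2 \end{pmatrix}\begin{pmatrix} C \\ S \end{pmatrix}V_1^* = \begin{pmatrix} \hat{U}_1\hat{C}\hat{V}_1^* & 0 \\ \hat{U}_2\hat{S}\hat{V}_1^* & 0 \end{pmatrix}V_0^* = \begin{pmatrix} \hat{A}_1 & 0 \\ \hat{A}_2 & 0 \end{pmatrix}V_0^* = AV_0V_0^* = A,
\]
where the padding columns of $U_1,U_2$ are annihilated by the zero blocks of $C,S$.

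The proof is essentially bookkeeping, and I do not expect any real obstacle; the only subtle point is verifying that $\hat{A}$ genuinely has orthonormal columns, which requires the partial isometry hypothesis and the specific choice of $V_0$ aligning its last $n-r$ columns with $\ker A$. Once that observation is in place, the classical CS decomposition and the block padding above do all the work.
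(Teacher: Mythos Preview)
Your proof is correct and complete; the only point requiring care---that $\hat{A}$ has orthonormal columns---is handled properly, since the partial isometry hypothesis gives $(A^*A)^2=A^*A$, so $A^*A$ is an orthogonal projector and your choice of $V_0$ diagonalizes it to $\mathrm{diag}(I_r,0)$.

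However, your route differs substantially from the paper's. You reduce to the classical CS decomposition by rotating away the kernel of $A$ and then padding back up; the paper instead argues directly from the polar decompositions $A_i=W_iH_i$, showing via the identity $A_1A^*A=A_1$ that $H_1^2H_2^2=H_2^2H_1^2=A_1^*A_1-(A_1^*A_1)^2$, hence $H_1^2$ and $H_2^2$ are simultaneously unitarily diagonalizable, and the decomposition follows. Your argument is shorter and more elementary, treating the result as a corollary of the known full-rank case. The paper's argument is self-contained (it does not invoke the classical CS decomposition) and, more to the point, is tailored to the paper's theme: it makes the simultaneous diagonalizability of $H_1$ and $H_2$ the central fact, which is exactly what Algorithm~\ref{alg:csd} exploits. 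So the paper's proof doubles as motivation for the algorithm, whereas yours establishes the theorem but does not foreshadow the computational method.
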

\begin{proof}
For $i=1,2$, let $A_i = W_i H_i$ be a polar decomposition of $A_i$.  Observe that since $A_1 A^*A = A_1$,
\begin{align*}
A_1 A_2^* A_2 
&= A_1 (A^*A - A_1^* A_1) \\
&= A_1 - A_1 A_1^* A_1.
\end{align*}
Thus,
\begin{align*}
H_1^2 H_2^2
&= (A_1^* A_1) (A_2^* A_2) \\
&= A_1^* (A_1 A_2^* A_2) \\
&= A_1^*A_1 - (A_1^* A_1)^2
\end{align*}
and
\begin{align*}
H_2^2 H_1^2
&= (A_2^* A_2) (A_1^* A_1) \\
&= (A_1 A_2^* A_2)^* A_1 \\
&= A_1^* A_1 - (A_1^* A_1)^2.
\end{align*}
This shows that $H_1^2$ and $H_2^2$ are commuting Hermitian positive semidefinite matrices, so they are simultaneously diagonalizable~\cite[Section 8.7.2]{golub2012matrix}: $H_1^2 = V \Lambda_1 V^*$ and $H_2^2 = V \Lambda_2 V^*$ for some unitary $V \in \mathbb{C}^{n \times n}$ and diagonal $\Lambda_1, \Lambda_2 \in \mathbb{C}^{n \times n}$ with nonnegative entries.  Moreover,
\begin{align*}
V (\Lambda_1 + \Lambda_2) V^*
&= H_1^2 + H_2^2 \\
&= A_1^* A_1 + A_2^* A_2 \\
&= A^*A.
\end{align*}
The matrix $\Lambda_1+\Lambda_2$, being similar to $A^*A$, has $r$ eigenvalues equal to 1 and $n-r$ equal to 0.  Since it is diagonal, we may order the columns of $V$ so that 
\[
\Lambda_1 + \Lambda_2 = \begin{pmatrix} I_r & 0 \\ 0 & 0_{n-r} \end{pmatrix}.
\]
The theorem follows from taking $U_1 = W_1 V$, $U_2 = W_2 V$, $C = \Lambda_1^{1/2}$, $S = \Lambda_2^{1/2}$, and $V_1 = V$.
\end{proof}

Note that~\eqref{eq:partCSdef} also proves the existence of an ``economical'' rank-deficient CS decomposition
  \begin{equation}
    \label{eq:partCSecon}
A = \begin{pmatrix} U_{1r} & 0 \\ 0 & U_{2r} \end{pmatrix} \begin{pmatrix} C_r \\ S_r \end{pmatrix} V_{1r}^*,     
  \end{equation}
where the subscript $r$ indicates the submatrices consisting of the leading $r$ columns (and rows for $C_r,S_r$): $U_{1r} \in \mathbb{C}^{m_1 \times r}$, $U_{2r} \in \mathbb{C}^{m_2 \times r}$, $C_r,S_r \in \mathbb{C}^{r \times r},$ and 
$V_{1r} \in \mathbb{C}^{n \times r}$. 

\subsection{Approximate partial isometries} \label{sec:partialiso}

In finite precision arithmetic, we will be interested in computing the CS decomposition of matrices that are approximate partial isometries, in the sense that $\|AA^*A-A\|$ is small.
The next pair of lemmas show that if $A$ is a matrix for which $\|AA^*A-A\|$ is small, then $A$ is close to a partial isometry $U$ whose rank coincides with the numerical rank of $A$.

We begin with a few definitions.  For a given unitarily invariant norm $\|\cdot\|$ and a given number $\varepsilon>0$, we define the $\varepsilon$-rank of a matrix $A \in \mathbb{C}^{m \times n}$ to be
\begin{equation} \label{rankeps}
\mathrm{rank}_\varepsilon(A) = \min_{\substack{B \in \mathbb{C}^{m \times n} \\ \|A-B\| \le \varepsilon}} \mathrm{rank}(B).
\end{equation}
Note that if $A = \sum_{k=1}^{\min(m,n)} \sigma_k u_k v_k^*$ is the singular value decomposition of $A$ in summation form, then the minimizer of~(\ref{rankeps}) is given by
\begin{equation} \label{truncatedSVD}
A_r = \sum_{k=1}^r \sigma_k u_k v_k^*,
\end{equation}
where $r = \mathrm{rank}_\varepsilon(A)$.

Let
\begin{equation} \label{distpartialiso}
d(A) =  \min_{\substack{U \in \mathbb{C}^{m \times n} \\ UU^*U = U}} \|A-U\|.
\end{equation}
It can be shown~(\cite{laszkiewicz2006approximation}) in the spectral norm that
\begin{equation}  \label{eq:dAeqn}
d(A) = \max_{1 \le i \le \min(m,n)} \min(\sigma_i(A),|1-\sigma_i(A)|),  
\end{equation}
and this minimum is achieved by the factor $U$ in the canonical polar decomposition $A_r=UH$, where $A_r$ is given by~(\ref{truncatedSVD}) and $r$ is the largest integer such that $\sigma_r \ge 1/2$.

Throughout this paper, we make use of the fact that in any unitarily invariant norm,
\begin{equation} \label{normABC}
\|ABC\| \le \min\{\sigma_1(A)\sigma_1(B)\|C\|, \, \sigma_1(A)\|B\|\sigma_1(C), \, \|A\|\sigma_1(B)\sigma_1(C) \}
\end{equation}
for any matrices $A$, $B$, $C$ whose product $ABC$ is defined~\cite[Equation (B.7)]{higham2008functions}.

The following lemma extends~\cite[Lemma 8.17]{higham2008functions} to approximate partial isometries having exact rank $r$.

\begin{lemma} \label{lemma:partialiso}
Let $A \in \mathbb{C}^{m \times n}$ have canonical polar decomposition $A=UH$.  If $\mathrm{rank}(A)=r$, then
\begin{equation}
\frac{\|AA^*A-A\|}{\sigma_1(A)(1+\sigma_1(A))} \le \|A-U\| \le \frac{\|AA^*A-A\|}{\sigma_r(A)(1+\sigma_r(A))}
\end{equation}
in any unitarily invariant norm.
\end{lemma}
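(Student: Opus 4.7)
The plan is to prove both bounds from a single factored identity linking $AA^*A - A$ to $A - U$. Starting from the canonical polar decomposition $A = UH$, I would first note that $U^*U$ is the orthogonal projector onto $\mathrm{range}(U^*) = \mathrm{range}(H)$, so $U^*UH = H$. This yields $AA^*A = UH^2U^*UH = UH^3$, and combining this with $A - U = U(H-I_n)$ and commutativity of polynomials in $H$ produces the key identity
\[
AA^*A - A \;=\; U(H^3-H) \;=\; U(H-I_n)\,H(H+I_n) \;=\; (A-U)\,H(H+I_n).
\]

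The lower bound then drops out immediately: $H(H+I_n)$ is Hermitian positive semidefinite with largest eigenvalue $\sigma_1(H)(\sigma_1(H)+1) = \sigma_1(A)(\sigma_1(A)+1)$, and applying~(\ref{normABC}) to the two-factor product $(A-U)\,H(H+I_n)$ gives
\[
\|AA^*A-A\| \;\le\; \|A-U\|\,\sigma_1(A)(\sigma_1(A)+1),
\]
which rearranges to the stated lower bound.

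The upper bound requires inverting the factor $H(H+I_n)$, and the main obstacle is that when $r < n$ this matrix is singular on $\mathrm{range}(H)^\perp$, so it cannot be inverted directly. To handle this, I would pass to the compact SVD $A = P\Sigma Q^*$ with $\Sigma = \mathrm{diag}(\sigma_1(A),\ldots,\sigma_r(A))$ invertible, so that $U = PQ^*$, $H = Q\Sigma Q^*$, $A - U = P(\Sigma-I_r)Q^*$, and $AA^*A - A = P(\Sigma^3-\Sigma)Q^*$. Factoring out $\Sigma(\Sigma+I_r)$, which is invertible on $\mathbb{C}^r$, yields
\[
A - U \;=\; (AA^*A-A)\,Q[\Sigma(\Sigma+I_r)]^{-1}Q^*.
\]
The spectral norm of the right-hand factor equals $1/[\sigma_r(A)(\sigma_r(A)+1)]$ because $t \mapsto t(t+1)$ is increasing on $(0,\infty)$, so a final application of~(\ref{normABC}) delivers the upper bound.
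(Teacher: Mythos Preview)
Your proof is correct and follows essentially the same approach as the paper: both arguments rest on the factorization $H^3-H=(H-I)H(H+I)$ (equivalently $\Sigma^3-\Sigma=(\Sigma-I)\Sigma(\Sigma+I)$) together with the bound~(\ref{normABC}). The only cosmetic difference is that the paper passes to the compact SVD at the outset and works entirely with diagonal $r\times r$ matrices for both inequalities, whereas you derive the lower bound at the polar-decomposition level via the identity $AA^*A-A=(A-U)H(H+I_n)$ before switching to the SVD for the upper bound.
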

\begin{proof}
Let $A=P\Sigma Q^*$ be the compact singular value decomposition of $A$ with $\Sigma\in\mathbb{R}^{r\times r}$, so that $U=PQ^*$.  Then
\begin{align*}
\|AA^*A-A\| 
&= \|\Sigma^3-\Sigma\| \\
&= \|\Sigma(\Sigma-I)(\Sigma+I)\| \\
&\le \sigma_1(A) \|\Sigma-I\| (\sigma_1(A)+1) \\
&= \sigma_1(A) \|A-U\| (\sigma_1(A)+1).
\end{align*}
On the other hand,
\begin{align*}
\|A-U\| 
&= \|\Sigma-I\| \\
&= \|\Sigma^{-1} (\Sigma^3-\Sigma) (\Sigma+I)^{-1}\| \\
&\le \sigma_1(\Sigma^{-1}) \|\Sigma^3-\Sigma\| \sigma_1( (\Sigma+I)^{-1} ) \\
&= \frac{1}{\sigma_r(A)} \|AA^*A-A\| \, \frac{1}{\sigma_r(A)+1}.
\end{align*}
\end{proof}

The next lemma handles the setting in which $\|AA^*A-A\|$ is small and $A$ has $\varepsilon$-rank $r$ rather than exact rank $r$.

\begin{lemma} \label{lemma:nearbypartialiso}
Let $A \in \mathbb{C}^{m \times n}$ have $\varepsilon$-rank $r$ with respect to a unitarily invariant norm $\|\cdot\|$.  Then there exists a partial isometry $U \in \mathbb{C}^{m \times n}$ of rank $r$ satisfying
\begin{equation} \label{nearbypartialiso}
\|A-U\| \le \varepsilon + \frac{\|AA^*A-A\|+\varepsilon(1+3\sigma_1(A)^2)}{\sigma_r(A)(1+\sigma_r(A))}.
\end{equation}
\end{lemma}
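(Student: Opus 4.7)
The plan is to reduce to Lemma \ref{lemma:partialiso} by truncating $A$ to its best rank-$r$ approximation. Let $A_r = \sum_{k=1}^r \sigma_k u_k v_k^*$ be the truncated SVD from~\eqref{truncatedSVD}, which has rank exactly $r$ (any smaller rank would contradict $\mathrm{rank}_\varepsilon(A) = r$) and satisfies $\|A - A_r\| \le \varepsilon$ by the definition of $\varepsilon$-rank. Let $A_r = UH$ be the canonical polar decomposition of $A_r$; here $U$ is a rank-$r$ partial isometry, which will be our candidate.

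By the triangle inequality,
\[
\|A - U\| \le \|A - A_r\| + \|A_r - U\| \le \varepsilon + \|A_r - U\|,
\]
and Lemma \ref{lemma:partialiso} applied to $A_r$ yields
\[
\|A_r - U\| \le \frac{\|A_r A_r^* A_r - A_r\|}{\sigma_r(A_r)(1 + \sigma_r(A_r))} = \frac{\|A_r A_r^* A_r - A_r\|}{\sigma_r(A)(1 + \sigma_r(A))},
\]
using $\sigma_r(A_r) = \sigma_r(A)$. Thus the whole task reduces to bounding the numerator $\|A_r A_r^* A_r - A_r\|$ in terms of $\|AA^*A - A\|$ and $\varepsilon$.

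The main (and essentially only) technical step is to control $\|A_r A_r^* A_r - AA^*A\|$. I would write the telescoping identity
\[
A_r A_r^* A_r - AA^*A = (A_r - A)A_r^* A_r + A(A_r - A)^* A_r + AA^*(A_r - A)
\]
and apply~\eqref{normABC} termwise, using $\sigma_1(A_r) \le \sigma_1(A)$ and $\|A - A_r\| \le \varepsilon$, to obtain
\[
\|A_r A_r^* A_r - AA^*A\| \le 3\,\sigma_1(A)^2\, \varepsilon.
\]
Then a further triangle inequality
\[
\|A_r A_r^* A_r - A_r\| \le \|A_r A_r^* A_r - AA^*A\| + \|AA^*A - A\| + \|A - A_r\|
\]
gives $\|A_r A_r^* A_r - A_r\| \le \|AA^*A - A\| + \varepsilon(1 + 3\sigma_1(A)^2)$. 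Substituting this into the bound for $\|A_r - U\|$ and combining with $\|A - U\| \le \varepsilon + \|A_r - U\|$ produces exactly~\eqref{nearbypartialiso}.

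The only nontrivial obstacle is the telescoping estimate: one has to pick a decomposition of $A_r A_r^* A_r - AA^*A$ whose three terms can each be bounded using the norm inequality~\eqref{normABC} so that the worst factor becomes $\sigma_1(A)^2$ rather than $\sigma_1(A_r)^2 + \sigma_1(A)\sigma_1(A_r) + \sigma_1(A)^2$; the form above does this cleanly since $\sigma_1(A_r)=\sigma_1(A)$. Everything else is bookkeeping via the triangle inequality and Lemma \ref{lemma:partialiso}.
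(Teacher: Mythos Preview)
Your proposal is correct and follows essentially the same route as the paper's own proof: truncate to $A_r$, apply Lemma~\ref{lemma:partialiso} to $A_r$, and bound $\|A_rA_r^*A_r-A_r\|$ via the same telescoping identity $(A_r-A)A_r^*A_r + A(A_r-A)^*A_r + AA^*(A_r-A)$ together with~\eqref{normABC}. The only differences are cosmetic (you make the rank-$r$ justification for $A_r$ explicit, which the paper leaves implicit).
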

\begin{proof}
Let $A_r$ be as in~(\ref{truncatedSVD}), and let $A_r=UH$ be the canonical polar decomposition of $A_r$.  
By Lemma~\ref{lemma:partialiso},
\begin{align*}
\|A-U\| \le \|A-A_r\| + \frac{\|A_r A_r^* A_r - A_r\|}{\sigma_r(A_r)(1+\sigma_r(A_r))}.
\end{align*}
Now since $\sigma_1(A_r)=\sigma_1(A)$, the inequality~(\ref{normABC}) implies
\begin{equation*}\begin{split}
\|A_r A_r^* A_r - A_r\|
&\le \|AA^*A-A\| + \|A_r-A\| + \|A_r A_r^* A_r - AA^*A\| \\
&\le \|AA^*A-A\| + \|A_r-A\| + \|(A_r-A)A_r^*A_r\| + \|A(A_r-A)^*A_r\| \\&\quad + \|AA^*(A_r-A)\| \\
&\le \|AA^*A-A\| + (1+3\sigma_1(A)^2) \|A_r-A\|.
\end{split}\end{equation*}
The result then follows from the relations $\sigma_r(A_r) = \sigma_r(A)$ and $\|A-A_r\| \le \varepsilon$.
\end{proof}

Note that Lemmas~\ref{lemma:partialiso} and \ref{lemma:nearbypartialiso} can be used to estimate $d(A)$ in~\eqref{distpartialiso}.

\section{Algorithm} \label{sec:algorithm}

In this section, we detail a general algorithm for computing the CS decomposition via two polar decompositions and a symmetric eigendecomposition.  We focus on the motivation behind the algorithm first, postponing an analysis of its stability to Section~\ref{sec:backwardstability}.

The observations made in Section~\ref{sec:intro} immediately suggest the following general strategy for computing the CS decomposition of a matrix $A = \left( \begin{smallmatrix} A_1 \\ A_2 \end{smallmatrix} \right)$ with $A_1,A_2 \in \mathbb{C}^{n \times n}$ and $A^*A=I$.  One can compute polar decompositions $A_1=W_1 H_1$ and $A_2 = W_2 H_2$, diagonalize $H_1$ and/or $H_2$ to obtain $V_1$, $C$ and $S$ such that $H_1 = V_1 C V_1^*$ and $H_2 = V_1 S V_1^*$, and set $U_1 = W_1 V_1$, $U_2 = W_2 V_1$.  For such an approach to be viable, it is critical that $V_1$ be computed in a stable way.  The following MATLAB example illustrates the pitfalls of a naive strategy: diagonalizing $H_1$.
\begin{lstlisting}[style=Matlab-editor]
theta = [1e-8 2e-8 3e-8];
C = diag(cos(theta));
S = diag(sin(theta));
V1 = [2 -1 2; 2 2 -1; 1 -2 -2]/3;
H1 = V1*C*V1';
H2 = V1*S*V1';
[V1,C] = eig(H1);
S = V1'*H2*V1;
max(max(abs(S-diag(diag(S)))))
     ans = 2.8187e-09
\end{lstlisting}
%
Here, the computed $V_1$ (which we will denote by $\widehat{V}_1$) provides a poor approximation of the eigenvectors of $H_2$, as evidenced by the size of the off-diagonal entry of $\widehat{V}_1^* H_2 \widehat{V}_1$ with the largest absolute value.  Ideally, in double-precision arithmetic, the latter quantity should be a small multiple of the unit roundoff $u=2^{-53} \approx 10^{-16}$.

One possible remedy is to perform simultaneous diagonalization~\cite{bunse1993numerical} to obtain $V_1$ from $H_1$ and $H_2$.  In this paper, we consider a different approach that exploits the special structure of $H_1$ and $H_2$.  The idea is to obtain $V_1$ by computing the eigendecomposition of $H_2-H_1$, whose eigenvectors are the same as those of $H_1$ and $H_2$.  The advantages of this approach, though not obvious at first glance, are easily illustrated.  In the MATLAB example above, replacing \lstinline[style=Matlab-editor]!eig(H1)! with \lstinline[style=Matlab-editor]!eig(H2-H1)! yields \lstinline[style=Matlab-editor]!ans = 1.7806e-17!.

To give more insight, we explain what goes wrong if we obtain $V_1$ via an eigendecomposition of $H_1$. 
Since $\cos\theta_i \approx 1-\tfrac{1}{2}\theta_i^2$ for small $\theta_i$, we have $|\cos\theta_i-\cos\theta_j| \approx \tfrac{1}{2}(\theta_i+\theta_j) |\theta_i-\theta_j| \ll |\theta_i-\theta_j|$ for small $\theta_i$ and $\theta_j$, rendering the eigenvectors of $H_1$ very sensitive to perturbation if two or more angles are close to zero.
A standard eigendecomposition algorithm for $H_1$ still gives a backward stable decomposition: $\|\widehat{V}_1^* H_1 \widehat{V}_1 - \widehat{\Lambda}_1\| \le \epsilon \|H_1\|$, where $\widehat{\Lambda}_1$ is diagonal and $\epsilon$ is a small multiple of $u$. However, inaccuracies in the columns of $\widehat{V}_1$ manifest themselves when we use the same $\widehat{V}_1$ to compute the eigendecomposition of $H_2$: 
For a computed eigenpair $(\widehat\lambda_i,\widehat v_i)$ 
with $\|\widehat v_i\|_2=1$
obtained in a backward stable manner, we have 
$H_1\widehat v_i=\widehat \lambda_i \widehat v_i+\epsilon$ where $\|\epsilon\|=O(u)$, where $u$ is the unit roundoff.  Expanding $\widehat v_i=\sum_{j=1}^nc_jv_j$ where $v_j$ are the exact eigenvectors of $H_1$, we have
$|v_j^* \epsilon| =  |c_j||\widehat\lambda_i-\lambda_j|$.
Now, the same $\widehat v_i$ taken as an approximate eigenvector of $H_2$ gives 
$H_2\widehat v_i=\widehat \lambda_{i,2} \widehat v_i+\epsilon_2$, where 
the choice $\widehat \lambda_{i,2}=\widehat v_i^*H_2\widehat v_i$ minimizes $\|\epsilon_2\|$.  We then have $|v_j^*\epsilon_2| = |\widehat \lambda_{i,2}-\lambda_{j,2}||c_j|$ for each $j$.  Using the above relation $|c_j|=\frac{|v_j^* \epsilon|}{|\widehat\lambda_i-\lambda_j|}$, we see that $|v_j^*\epsilon_2| =
|v_j^*\epsilon| \frac{|\widehat \lambda_{i,2}-\lambda_{j,2}|}{|\widehat\lambda_i-\lambda_j|}$ for each $j$. The crucial observation is that for each $j$, the $v_j$-component of $\epsilon$ is magnified by the factor 
$\frac{|\widehat \lambda_{i,2}-\lambda_{j,2}|}{|\widehat\lambda_i-\lambda_j|}$, the ratio in the eigenvalue gap.
In the above setting, $\lambda_{i}=\cos\theta_i$ and $\lambda_{i,2}=\sin\theta_i$, and since the eigenvalues have $O(\|\epsilon\|^2)$ accuracy~\cite{parlettsym}, no essence is lost in taking $\widehat\lambda_{i}=\lambda_{i}$ and $\widehat\lambda_{i,2}=\lambda_{i,2}$.  Thus, since  
$|\sin\theta_i-\sin\theta_j| \approx |\theta_i-\theta_j|$ and $|\cos\theta_i-\cos\theta_j| \approx \tfrac{1}{2}(\theta_i+\theta_j) |\theta_i-\theta_j|$ for small $\theta_i$ and $\theta_j$, the relative gap $|\sin\theta_i-\sin\theta_j| / |\cos\theta_i-\cos\theta_j| \approx 2/(\theta_i+\theta_j)$ can be arbitrarily large, in which case $\widehat{V}_2$ does not give a backward stable eigendecomposition for $H_2$: $\|\widehat{V}_1^* H_2 \widehat{V}_1 - \widehat{\Lambda}_2\| \gg  u\|H_2\|$.

A similar problem occurs if $V_1$ is obtained via an eigendecomposition of $H_2$.  If two or more angles are close to $\tfrac{\pi}{2}$, their sines are closely spaced, rendering the eigenvectors of $H_2$ very sensitive to perturbation.  In this scenario, numerical experiments show that $\widehat{V}_1^* H_1 \widehat{V}_1$ can have off-diagonal entries with unacceptably large magnitude.  The essence of the problem is that for $\theta_i \neq \theta_j$ near $\tfrac{\pi}{2}$, the ratio $|\cos\theta_i-\cos\theta_j| / |\sin\theta_i-\sin\theta_j| \approx 2/(\pi-\theta_i-\theta_j)$ can be arbitrarily large.

Obtaining $V_1$ via an eigendecomposition of $H_2-H_1$ sidesteps these difficulties for the following reason.  The function $g(\theta)=\sin\theta-\cos\theta$ has derivative $g'(\theta) \ge 1$ on $[0,\tfrac{\pi}{2}]$, from which it is easy to show that $|\cos\theta_i-\cos\theta_j|/|g(\theta_i)-g(\theta_j)| \le 1$ and $|\sin\theta_i-\sin\theta_j|/|g(\theta_i)-g(\theta_j)| \le 1$ for every $\theta_i,\theta_j \in [0,\tfrac{\pi}{2}]$ with $\theta_i \neq \theta_j$.  In other words, the eigenvalues of $H_1$ and $H_2$ are spaced no further apart than the eigenvalues of $H_2-H_1$.  As a result, the arguments in the preceding paragraphs suggest that the numerically computed eigenvectors of $H_2-H_1$ likely provide a backward stable approximation of the eigenvectors of both $H_1$ and $H_2$.  As an aside, note that another seemingly natural alternative -- computing the eigendecomposition of $H_1+H_2$ -- is not viable since the derivative of $\cos\theta+\sin\theta$ vanishes at $\theta=\tfrac{\pi}{4}$.

\paragraph{Extension to partial isometries} 
With one caveat, all of the arguments in the preceding paragraph carry over to the more general setting in which $A = \left(\begin{smallmatrix}A_1 \\ A_2\end{smallmatrix}\right)$ is a partial isometry with $A_1 \in \mathbb{C}^{m_1 \times n}$, $A_2 \in \mathbb{C}^{m_2 \times n}$, and $m_1,m_2 \ge n$.  The caveat is that if $A$ is rank-deficient and has principal angle(s) $\theta_i$ equal to $\pi/4$, then it may be impossible to distinguish between two eigenspaces of $H_1$ and $H_2$: the eigenspace $\mathcal{V}_0$ corresponding to the eigenvalue 0, and the eigenspace $\mathcal{V}_{1/\sqrt{2}}$ corresponding to the eigenvalue $\cos(\pi/4)=\sin(\pi/4)=1/\sqrt{2}$.   Indeed, both of these eigenspaces correspond to the zero eigenvalue of $H_2-H_1$.  Even if $\theta_i \neq \pi/4$ for every $i$, numerical instabilities can still arise if any angle $\theta_i$ is close to $\pi/4$.

Fortunately, there is a simple remedy to this problem. When $A$ is rank-deficient, then instead of computing the eigendecomposition of $H_2-H_1$, one can compute the eigendecomposition of $B=H_2-H_1 + \mu(I-A^*A)$, where $\mu > 1$ is a scalar.  This has the effect of shifting the eigenvalue corresponding to $\mathcal{V}_0$ away from all of the other eigenvalues of $B$.  Indeed, if $H_1 = V_1 \left(\begin{smallmatrix} C_r & 0 \\ 0 & 0_{n-r}\end{smallmatrix}\right) V_1^*$ and $H_2 = V_1 \left(\begin{smallmatrix} S_r & 0 \\ 0 & 0_{n-r}\end{smallmatrix}\right) V_1^*$, then
\[
B = V_1 \begin{pmatrix} S_r-C_r & 0 \\ 0 & \mu I_{n-r} \end{pmatrix} V_1^*,
\]
and the diagonal entries of $S_r-C_r$ lie in the interval $[-1,1] \not\owns \mu$.

\paragraph{Algorithm summary} The algorithm that results from these considerations is summarized below.  In what follows, we use $\mathrm{diag}$ to denote (as in MATLAB) the operator that, if applied to a matrix $X \in \mathbb{C}^{n \times n}$, returns a vector $x \in \mathbb{C}^n$ with $x_i=X_{ii}$, and, if applied to a vector $x \in \mathbb{C}^n$, returns $X \in \mathbb{C}^{n \times n}$ with $X_{ii}=x_i$ and $X_{ij}=0$ for $i \neq j$.

\begin{breakablealgorithm} \label{alg:csd}
\caption[CS decomposition]{CS decomposition of a partial isometry $A = \left(\begin{smallmatrix}A_1 \\ A_2\end{smallmatrix}\right)$, $A_1 \in \mathbb{C}^{m_1 \times n}$, $A_2 \in \mathbb{C}^{m_2 \times n}$, $m_1,m_2 \ge n$}
\begin{algorithmic}[1]
\STATE{$W_1 H_1 = A_1$ (polar decomposition)} \label{line:polar1}
\STATE{$W_2 H_2 = A_2$ (polar decomposition)} \label{line:polar2}
\IFTHENELSE{$\mathrm{rank}(A)=n$}{$\mu=0$}{$\mu=2$} \label{line:mu}
\STATE{$B = H_2-H_1 + \mu(I-A^*A)$} \label{line:B}
\STATE{$V_1 \Lambda V_1^* = B$ (symmetric eigendecomposition)} \label{line:eig}
\STATE{$U_1 = W_1 V_1$} \label{line:U1}
\STATE{$U_2 = W_2 V_1$} \label{line:U2}
\STATE{$C = \mathrm{diag}(\mathrm{diag}(V_1^* H_1 V_1))$} \label{line:C}
\STATE{$S = \mathrm{diag}(\mathrm{diag}(V_1^* H_2 V_1))$} \label{line:S}
\RETURN $U_1,U_2,C,S,V_1$
\end{algorithmic}
\end{breakablealgorithm}\vspace{0.05in}

We conclude this section with a few remarks.

\begin{enumerate}[label=(\thesection.\roman*),ref=\thesection.\roman*]
\item The algorithm treats $A_1$ and $A_2$ in a symmetric way, in the sense that when $\mu=0$, exchanging the roles of $A_1$ and $A_2$ merely negates $B$ and $\Lambda$ (thereby sending $\theta_i = \arctan(S_{ii}/C_{ii})$ to $\pi/4-\theta_i$ for each $i$).  
\item \label{parallel} Lines~\ref{line:polar1}-\ref{line:polar2} and lines~\ref{line:U1}-\ref{line:S} can each be carried out in parallel.  Furthermore, if $A^*A$ is needed in Line~\ref{line:B}, then it can be computed in parallel with lines~\ref{line:polar1}-\ref{line:polar2}.
\item \label{postprocessing} As a post-processing step, one can compute $\theta_i = \arctan(S_{ii}/C_{ii})$, $i=1,2,\dots,n$, and overwrite $C$ and $S$ with $\mathrm{diag}(\cos\theta)$ and $\mathrm{diag}(\sin\theta)$, respectively (with the obvious modifications for rank-deficient $A$).  
It is not hard to verify that this has the effect of reducing $\left\|C^2+S^2-\left(\begin{smallmatrix} I_r & 0 \\ 0 & 0_{n-r} \end{smallmatrix}\right)\right\|$ without disrupting the backward stability of the algorithm.
\item An alternative, cheaper way to compute $C$ and $S$ in lines~\ref{line:C}-\ref{line:S} is to solve the equation $\sin\theta_i-\cos\theta_i = \Lambda_{ii}$ for $\theta_i$, $i=1,2,\dots,n$, and set $C=\mathrm{diag}(\cos\theta)$, $S=\mathrm{diag}(\sin\theta)$ (with the obvious modifications for rank-deficient $A$).  Our numerical experiments suggest that this approach is generally less accurate than lines~\ref{line:C}-\ref{line:S}, but it still renders the algorithm backward stable.  Our analysis will focus on the use of lines~\ref{line:C}-\ref{line:S} to obtain $C$ and $S$, but it can be easily modified to treat the alternative approach.
\item Let us examine the arithmetic cost in flop counts. 
The steps that require $O(n^3)$ flops 
are two polar decompositions (lines~\ref{line:polar1}--\ref{line:polar2})
and a symmetric eigendecomposition (line~\ref{line:eig}), in addition to matrix-product operations whose flop counts are clear: $A^*A$ (line~\ref{line:B}, costing $(m_1+m_2)n^2$ flops exploiting symmetry) and $U_1,U_2$ (lines~\ref{line:U1}--\ref{line:U2}, $2 m_i n^2$ flops for each $i=1,2$), and the diagonal elements of $C$ and $S$ (lines~\ref{line:C}--\ref{line:S}, $2n^3$ flops each). The costs of the polar and eigenvalue decompositions depend on the algorithm used. When Zolo-pd and Zolo-eig are used, they are $64 m_i n^2+\frac{8}{3}n^3$ flops for each $i=1,2$ ($8\max\{m_1,m_2\}n^2+\frac{1}{3}n^3$ along the critical path) for Zolo-pd and about $55n^3$ flops  ($16n^3$) for Zolo-eig~\cite[Table~5.1,5.2]{nakatsukasa2016computing}. Zolo-csd thus requires a total of about $67(m_1+m_2)n^2+64n^3$ flops ($10\max\{m_1,m_2\}n^2+16n^3$ along the critical path). Clearly, the polar and eigenvalue decompositions form the majority of the computation. When a classical algorithm is used for these decompositions (via the SVD for polar, costing $8 m_i n^2+20n^3$ and $9n^3$ for the eigendecomposition), the overall cost is $11(m_1+m_2)n^2+53n^3$ flops ($10\max\{m_1,m_2\}n^2+29n^3$ along the critical path). 
It is worth noting that these flop counts usually do not accurately reflect the actual running time, particularly in a massively parallel computing environment; they are presented here for reference purposes.

\item In applications, we expect that users will know in advance whether $A$ is full-rank or not. In the rare situation in which it is not known until runtime, 
an inexpensive approach is to compute $\|A\|_F$, noting that assuming $d(A)\ll 1$, we have $\|A\|_F\approx \sqrt{\mbox{rank}(A)}$. Another alternative is to perform a rank-revealing $QR$ factorization (for instance) in line~\ref{line:mu}.  Needless to say, one should measure the $\varepsilon$-rank of $A$ for a suitable tolerance $\varepsilon>0$, not the exact rank of $A$.
\item In the rank-deficient case $r<n$, an economical CS decomposition~\eqref{eq:partCSecon} can be obtained by a simple modification as follows:
After line~\ref{line:eig}, we extract the eigenvalues $\Lambda_{ii}\in [-1,1]$ 
(there should be $r$ of them)
and their corresponding eigenvectors $V_{1r}\in\mathbb{C}^{n\times r}$ (the remaining $n-r$ columns of $V_1$ are the null vectors of $A$). 
We have $H_2-H_1= V_{1r}\Lambda_rV_{1r}^*$, where $\Lambda_r \in \mathbb{C}^{r \times r}$ is diagonal with the $r$ eigenvalues of $\Lambda$ lying in $[-1,1]$ on its diagonal (assuming $A$ is an exact partial isometry). 
Finally, we let $U_i:=W_iV_{1r}\in \mathbb{C}^{m_i \times r}$, 
and $C = \mathrm{diag}(\mathrm{diag}(V_{1r}^* H_1 V_{1r}))\in \mathbb{C}^{r \times r}$, 
$S = \mathrm{diag}(\mathrm{diag}(V_{1r}^* H_2 V_{1r}))\in \mathbb{C}^{r \times r}$
to obtain the rank-deficient CS decomposition $A_1=U_1CV_{1r}^*,A_2=U_2SV_{1r}^*$ (we output $V_1:=V_{1r}$). 
\item \label{remark:2x2} It is well-known that any unitary $A = \left( \begin{smallmatrix} A_1 & A_3 \\ A_2 & A_4 \end{smallmatrix} \right) \in \mathbb{C}^{2n \times 2n}$ admits a \emph{complete $2 \times 2$ CS decomposition}
\[
A = \begin{pmatrix} U_1 & 0 \\ 0 & U_2 \end{pmatrix} \begin{pmatrix} C & -S \\ S & C \end{pmatrix} \begin{pmatrix} V_1 & 0 \\ 0 & V_2 \end{pmatrix}^*,
\]
where $U_1,U_2,V_1,V_2 \in \mathbb{C}^{n \times n}$ are unitary, $C,S \in \mathbb{C}^{n \times n}$ are diagonal with nonnegative entries, and $C^2+S^2=I$~\cite[Section 2.6.4]{golub2012matrix}.  Algorithm~\ref{alg:csd} applied to $\left( \begin{smallmatrix} A_1 \\ A_2 \end{smallmatrix} \right)$ computes all of these matrices except $V_2$.  If $V_2$ is desired, we advocate using the following strategy from~\cite[Section 4.11]{sutton2013divide}: 
compute $X = -A_3^* U_1 S + A_4^* U_2 C$ and its 
QR decomposition $X=QR$, and set $V_2 = Q$ (the QR decomposition may be unnecessary, because if $A$ is exactly unitary, then so is $X$). 
An argument similar to the proof of~\cite[Theorem 18]{sutton2013divide} shows that this algorithm for the $2 \times 2$ CS decomposition is backward stable if Algorithm~\ref{alg:csd} is backward stable.
\end{enumerate}

\section{Backward stability} \label{sec:backwardstability}

In this section, we prove that Algorithm~\ref{alg:csd} is backward stable, provided that the polar decompositions in lines~\ref{line:polar1}-\ref{line:polar2} and the eigendecomposition in line~\ref{line:eig} are computed in a backward stable way.  

Throughout this section, we continue to use $\|\cdot\|$ to denote any unitarily invariant norm.  We denote by $c_n$ the absolute condition number of the map $\mathcal{H}$ sending a matrix $A \in \mathbb{C}^{m \times n}$ ($m \ge n$) to the Hermitian positive semidefinite factor $H=\mathcal{H}(A)$ in the polar decomposition $A=WH$; that is, 
\begin{equation} \label{condH}
c_n = \max_{\substack{A,\Delta A \in \mathbb{C}^{m \times n} \\ \Delta A \neq 0}} \frac{\|\mathcal{H}(A+\Delta A)-\mathcal{H}(A)\|}{\|\Delta A\|}.
\end{equation}
It is easy to check that this number depends on $n$ and $\|\cdot\|$ but not $m$.  In the Frobenius norm, $c_n=\sqrt{2}$ is constant~\cite[Theorem 8.9]{higham2008functions}. In the 2-norm, it is known that $b_n \le c_n \le 1 + 2b_n$ where $b_n \sim \frac{2}{\pi}\log n$.~\cite[Corollary 4.3]{mathias1993hadamard}.  We will also make use of the fact that
\[
\|\mathrm{diag}(\mathrm{diag}(A))\| \le \|A\|
\]
in any unitarily invariant norm~\cite[p. 152]{bhatia1989comparing}.

\begin{theorem} \label{thm:backwardstability}
Let $A = \left(\begin{smallmatrix} A_1 \\ A_2\end{smallmatrix}\right) \in \mathbb{C}^{m \times n}$ with $A_1 \in \mathbb{C}^{m_1 \times n}$, $A_2 \in \mathbb{C}^{m_2 \times n}$, $m_1,m_2 \ge n$, and $m=m_1+m_2$.  
Suppose that Algorithm~\ref{alg:csd} computes the following quantities with indicated errors:
\begin{align}
\widehat{W}_1 \widehat{H}_1 &= A_1 + \Delta A_1, \label{polarA1} \\
\widehat{W}_2 \widehat{H}_2 &= A_2 + \Delta A_2, \label{polarA2} \\
\widehat{B} &= \widehat{H}_2 - \widehat{H}_1 + \mu (I-A^*A) + \Delta B_1, \\
\widehat{V}_1 \widehat{\Lambda} \widehat{V}_1^* &= \widehat{B} + \Delta B_2, \\
\widehat{C} &= \mathrm{diag}(\mathrm{diag}(\widehat{V}_1^* \widehat{H}_1 \widehat{V}_1)) + \Delta C, \\
\widehat{S} &= \mathrm{diag}(\mathrm{diag}(\widehat{V}_1^* \widehat{H}_2 \widehat{V}_1)) + \Delta S,
\end{align}
where $\widehat{\Lambda}$, $\widehat{C}$, and $\widehat{S}$ are real and diagonal, $\widehat{H}_1$ and $\widehat{H}_2$ are Hermitian, and $\mu \ge 0$. 
Assume that $\|\widehat{W}_1^*\widehat{W}_1-I\|$, $\|\widehat{W}_2^*\widehat{W}_2-I\|$, $\|\widehat{V}_1^*\widehat{V}_1-I\|$, $\min_{G=G^* \ge 0}\|\widehat{H}_1-G\|$, and $\min_{G=G^* \ge 0}\|\widehat{H}_2-G\|$ are each bounded above by a number $\delta$. 
If~(\ref{distpartialiso}) has no minimizer of rank $n$, assume further that $\mu>1$.
Then
\begin{align}
\| \widehat{W}_1 \widehat{V}_1 \widehat{C} \widehat{V}_1^* - A_1 \| &\le (4c_n+1) \|\Delta A_1\| + 2c_n\|\Delta A_2\| + 2\|\Delta B_1\| + 2\|\Delta B_2\| + \|\Delta C\| \nonumber\\&\quad + (9c_n+10+2\max\{\mu,1\}) \delta + (6c_n+4\mu) d(A) + o(\eta), \label{backwarderror1} \\
\| \widehat{W}_2 \widehat{V}_1 \widehat{S} \widehat{V}_1^* - A_2 \| &\le 2c_n \|\Delta A_1\| + (4c_n+1)\|\Delta A_2\| + 2\|\Delta B_1\| + 2\|\Delta B_2\| + \|\Delta S\| \nonumber\\&\quad +  (9c_n+10+2\max\{\mu,1\}) \delta + (6c_n+4\mu) d(A) + o(\eta), \label{backwarderror2}
\end{align}
asymptotically as 
\begin{equation} \label{maxerr}
\eta := \max\{\delta, d(A), \|\Delta A_1\|, \|\Delta A_2\|, \|\Delta B_1\|, \|\Delta B_2\|, \|\Delta C\|, \|\Delta S\|\} \rightarrow 0.
\end{equation}
\end{theorem}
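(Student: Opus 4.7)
The plan is to show that the quantity $\widehat C$, whose definition selects only the diagonal of $\widehat V_1^* \widehat H_1 \widehat V_1$, differs from $\widehat V_1^* \widehat H_1 \widehat V_1$ itself only by a small amount, so that $\widehat W_1 \widehat V_1 \widehat C \widehat V_1^*$ is close to $\widehat W_1 \widehat H_1 = A_1 + \Delta A_1$. Concretely, writing $E_1 := \widehat V_1^* \widehat H_1 \widehat V_1 - \mathrm{diag}(\mathrm{diag}(\widehat V_1^* \widehat H_1 \widehat V_1))$ for the strictly off-diagonal part, using the identity $\|\widehat V_1\widehat V_1^* - I\| \le \delta$ that follows from $\|\widehat V_1^*\widehat V_1 - I\| \le \delta$ (since $\widehat V_1$ is square), and repeatedly invoking (\ref{normABC}), I would rewrite
\[
\widehat W_1 \widehat V_1 \widehat C \widehat V_1^* - A_1 \;=\; \Delta A_1 \;+\; \widehat W_1 \widehat V_1 \Delta C \widehat V_1^* \;-\; \widehat W_1 \widehat V_1 E_1 \widehat V_1^* \;+\; o(\eta).
\]
The first three terms account for the $\|\Delta A_1\|$, $\|\Delta C\|$, and associated $c_n$ and $\delta$ contributions in (\ref{backwarderror1}); the analogous computation will give (\ref{backwarderror2}). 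Thus the whole proof reduces to bounding $\|E_1\|$ and its $\widehat H_2$-counterpart $\|E_2\|$.

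To bound $\|E_1\|$, I would first establish that $\widehat H_1$, $\widehat H_2$, and $\widehat B$ approximately commute, so that the eigenbasis $\widehat V_1$ of $\widehat B$ approximately diagonalizes the other two as well. Starting from $\widehat W_i^*\widehat W_i = I + O(\delta)$ and $\widehat W_i\widehat H_i = A_i + \Delta A_i$, one obtains $\widehat H_i^2 = A_i^* A_i + O(\eta)$. Mimicking the commutator calculation in the proof of Theorem~\ref{thm:csdpartial}, with $AA^*A = A + O(d(A))$ in place of the exact partial-isometry identity, gives $[A_1^*A_1, A_2^*A_2] = O(d(A))$, hence $[\widehat H_1^2, \widehat H_2^2] = O(\eta + d(A))$. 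Because $\widehat H_1$ and $\widehat H_2$ are within $\delta$ of being positive semidefinite, a standard square-root argument promotes this to $[\widehat H_1, \widehat H_2] = O(\eta + d(A))$; using $A^*A = \widehat H_1^2 + \widehat H_2^2 + O(\eta)$, an analogous estimate yields $[\widehat H_i, A^*A] = O(\eta + d(A))$. Together with the definition of $\widehat B$, these give $[\widehat H_1, \widehat B] = O(\eta + d(A) + \|\Delta B_1\|)$.

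Conjugating this commutator by $\widehat V_1$ and using the approximate eigendecomposition $\widehat V_1 \widehat\Lambda \widehat V_1^* = \widehat B + \Delta B_2$ yields $[M, \widehat\Lambda] = O(\eta + d(A))$ with $M := \widehat V_1^* \widehat H_1 \widehat V_1$, whose $(i,j)$-entry equals $M_{ij}(\widehat\lambda_j - \widehat\lambda_i)$. This controls $|M_{ij}|$ whenever the eigenvalue gap of $\widehat B$ is bounded below. I expect the main obstacle to be the degenerate case of clustered eigenvalues of $\widehat B$, in which the quotient $(\widehat\lambda_j - \widehat\lambda_i)^{-1}$ is dangerous. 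The resolution, foreshadowed in the discussion preceding the theorem, uses that $g(\theta) = \sin\theta - \cos\theta$ has derivative at least $1$ on $[0, \pi/2]$, which forces clustered eigenvalues of $\widehat B$ to coincide with clustered values of $\cos\theta_i$ and $\sin\theta_i$; hence on any near-degenerate spectral block of $\widehat\Lambda$, the restriction of $M$ is approximately scalar and its within-block off-diagonal entries remain small despite the vanishing gap. The shift $\mu(I - A^*A)$ with $\mu > 1$ places the null-space eigenvalue outside $[-1,1]$, separating it from the spectrum of $\widehat H_2 - \widehat H_1$ and preventing any additional near-degeneracy across the two spectral regions; this is precisely the role of the hypothesis on $\mu$. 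Assembling $\|E_1\|, \|E_2\| = O(\eta + d(A))$ with the correct explicit constants and substituting into the first step then produces (\ref{backwarderror1}) and (\ref{backwarderror2}); pinning down the exact coefficients of each error term is routine bookkeeping.
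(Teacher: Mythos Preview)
Your reduction in the first paragraph is correct and matches the paper: the whole matter does reduce to bounding the off-diagonal part $E_1$ of $\widehat V_1^*\widehat H_1\widehat V_1$. But your route to that bound diverges from the paper and has two genuine gaps.

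\textbf{The square-root commutator step.} You claim that from $[\widehat H_1^2,\widehat H_2^2]=O(\eta)$ one gets $[\widehat H_1,\widehat H_2]=O(\eta)$ by ``a standard square-root argument.'' There is no such standard argument: the matrix square root is not Lipschitz at $0$, and the best general estimates for passing commutator bounds through $t\mapsto t^{1/2}$ lose a square root, yielding only $O(\sqrt{\eta})$. Precisely the regimes of interest here (angles near $0$ or $\pi/2$) force one of $\widehat H_1,\widehat H_2$ to have eigenvalues near $0$, so you cannot assume invertibility. The paper sidesteps this entirely by introducing a nearby \emph{exact} partial isometry $\widetilde A$ with exact commuting factors $H_1,H_2$ and bounding $\|\widehat H_i-H_i\|\le c_n\eta$ via the conditioning of the polar $H$-factor; once you have exact $H_i$, the commutator question never arises.

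\textbf{The clustered-eigenvalue case.} Knowing only that $[M,\widehat\Lambda]=O(\eta)$ gives no control of $M_{ij}$ when $\widehat\lambda_i\approx\widehat\lambda_j$; your fix is to assert that the restriction of $M$ to a near-degenerate block is ``approximately scalar.'' But to prove this you need to compare the $\widehat B$-eigenspace with the corresponding $B$-eigenspace (Davis--Kahan, requiring a gap from the \emph{rest} of the spectrum) and then use that $H_1$ restricted to that $B$-eigenspace has eigenvalue spread no larger than the $B$-spread. That last fact is exactly the statement that $H_1=f(B)$ for an $f$ with $|f'|\le 1$ on the spectrum---which is the paper's key device. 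The paper makes this explicit: it constructs $f(z)=\tfrac12(-z+\sqrt{2-z^2})$ (patched to $0$ outside $[-1,1]$ when $r<n$), observes $f(B)=H_1$ and $\sup|f'|\le 1$, and then bounds $\|\widehat V_1^*\widehat H_1\widehat V_1-f(\widehat\Lambda)\|$ in one stroke via the Fr\'echet derivative estimate $\|L_f(B,E)\|\le\|E\|$. Since $f(\widehat\Lambda)$ is diagonal, $\|E_1\|\le 2\|\widehat V_1^*\widehat H_1\widehat V_1-f(\widehat\Lambda)\|$ follows immediately. This handles all clusterings uniformly, with no block decomposition, and yields the stated constants directly; your commutator-and-blocks approach, even if patched, would not obviously produce the same coefficients, so calling that ``routine bookkeeping'' is optimistic.
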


Before proving the theorem, we make a few remarks.  
First, the smallness of the quantities $\|\Delta A_i\|$, $\|\widehat{W}_i^*\widehat{W}_i-I\|$, and $\min_{G=G^* \ge 0}\|\widehat{H}_i-G\|$ is equivalent to the condition that the polar decompositions $A_i \approx \widehat{W}_i \widehat{H}_i$, $i=1,2$, are computed in a backward stable way~\cite{nakatsukasa2012backward}.  Second, the smallness of $\|\Delta B_2\|$ and $\|\widehat{V}_1^*\widehat{V}_1-I\|$ corresponds to the condition that the eigendecomposition of $\widehat{B}$ is computed in a backward stable way.  Smallness of $\|\Delta B_1\|$, $\|\Delta C\|$, and $\|\Delta S\|$ is automatic in floating point arithmetic. 
We also note that $A$ is not assumed to be exactly a partial isometry; its deviation is measured by $d(A)$.
Theorem~\ref{thm:backwardstability} thus says that Algorithm~\ref{alg:csd} is backward stable whenever 
$d(A)$ is small and
the polar decompositions in lines~\ref{line:polar1}-\ref{line:polar2} and the symmetric eigendecomposition in line~\ref{line:eig} are computed in a backward stable way.  We give examples of backward stable algorithms for the polar decomposition and symmetric eigendecomposition in Section~\ref{sec:polareig}.

The estimates~(\ref{backwarderror1}-\ref{backwarderror2}) have been written in full detail to make clear the contribution of each source of error.  Coarser estimates of a more memorable form are easy to write down. Consider, for example, the setting in which $A$ has nearly orthonormal columns, so that $\mu$ can be taken equal to zero.  Then~(\ref{backwarderror1}-\ref{backwarderror2}) imply that
\begin{align}
\| \widehat{W}_1 \widehat{V}_1 \widehat{C} \widehat{V}_1^* - A_1 \|_2 &\lessapprox \left(39+\frac{84}{\pi}\log n\right)\eta + o(\eta), \label{2normbackwarderror1} \\
\| \widehat{W}_2 \widehat{V}_1 \widehat{S} \widehat{V}_1^* - A_2 \|_2 &\lessapprox \left(39+\frac{84}{\pi}\log n\right)\eta + o(\eta), \label{2normbackwarderror2}
\end{align}
in the 2-norm, and
\begin{align}
\| \widehat{W}_1 \widehat{V}_1 \widehat{C} \widehat{V}_1^* - A_1 
\|_F &\le (18+21\sqrt{2})\eta + o(\eta), \label{Fnormbackwarderror1} \\
\| \widehat{W}_2 \widehat{V}_1 \widehat{S} \widehat{V}_1^* - A_2 \|_F &\le (18+21\sqrt{2})\eta + o(\eta), \label{Fnormbackwarderror2}
\end{align}
in the Frobenius norm, where $\eta$ is given by~(\ref{maxerr}).  (Our numerical experiments suggest that these are pessimistic estimates.)

\paragraph{Proof of Theorem~\ref{thm:backwardstability}}
To prove the theorem, let $\widetilde{A} \in \mathbb{C}^{m \times n}$ be a partial isometry of maximal rank such that $\|A-\widetilde{A}\|=d(A)$.  Let $r = \mathrm{rank}(\widetilde{A})$, and let $\widetilde{A}_1 \in \mathbb{C}^{m_1 \times n}$ and $\widetilde{A}_2 \in \mathbb{C}^{m_2 \times n}$ be such that $\widetilde{A} = \left(\begin{smallmatrix} \widetilde{A}_1 \\ \widetilde{A}_2\end{smallmatrix}\right)$.
Let
\[
\begin{pmatrix} \widetilde{A}_1 \\ \widetilde{A}_2 \end{pmatrix} = \begin{pmatrix} U_1 & 0 \\ 0 & U_2 \end{pmatrix} \begin{pmatrix} C \\ S \end{pmatrix} V_1^*
\]
be a
CS decomposition of $\widetilde{A}$. Let $0 \le \theta_1 \le \theta_2 \le \cdots \le \theta_r \le \frac{\pi}{2}$ be the corresponding angles such that $C = \left(\begin{smallmatrix} C_r & 0 \\ 0 & 0_{n-r}\end{smallmatrix}\right)$ and $S = \left(\begin{smallmatrix} S_r & 0 \\ 0 & 0_{n-r}\end{smallmatrix}\right)$ with $C_r=\mathrm{diag}(\cos\theta)$ and $S_r=\mathrm{diag}(\sin\theta)$.  Define $W_1 = U_1 V_1^*$, $H_1 = V_1 C V_1^*$, $W_2 = U_2 V_1^*$, and $H_2 = V_1 S V_1^*$, so that $W_i H_i$ is a polar decomposition of $\widetilde{A}_i$ for each $i$.

\begin{lemma}
Let $\Delta H_i = \widehat{H}_i-H_i$, $i=1,2$. Then, to leading order,
\begin{align}
\|\Delta H_i\| &\le \delta + c_n\left(\|A_i - \widetilde{A}_i\| + \|\Delta A_i\| + \tfrac{3}{2}\delta\right), \label{normDeltaH}
\end{align}
where $c_n$ is given by~(\ref{condH}).
\end{lemma}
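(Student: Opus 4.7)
\emph{Proof plan.} The strategy is to sandwich $\widehat{H}_i$ between a genuine Hermitian positive semidefinite matrix $G_i$ that is close to $\widehat{H}_i$ (supplied by the hypothesis) and the exact polar factor $H_i = \mathcal{H}(\widetilde{A}_i)$. The idea is to realize $G_i$ itself as the $\mathcal{H}$-factor of a nearby matrix $A_i'$ and then invoke the condition-number bound~\eqref{condH}. The triangle inequality $\|\Delta H_i\| \le \|\widehat{H}_i - G_i\| + \|G_i - H_i\|$ splits the error into one piece of size at most $\delta$ and one piece amenable to the $c_n$ estimate.

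First I would pick $G_i$ achieving the minimum in $\min_{G = G^* \ge 0}\|\widehat{H}_i - G\| \le \delta$, and form the polar decomposition $\widehat{W}_i = W_i' P_i$ with $W_i'$ having orthonormal columns and $P_i$ Hermitian positive semidefinite. Since $P_i^2 = \widehat{W}_i^*\widehat{W}_i$, we have $\|P_i^2 - I\| \le \delta$, and the identity $P_i - I = (P_i + I)^{-1}(P_i^2 - I)$ together with $\|(P_i + I)^{-1}\|_2 = \tfrac{1}{2} + O(\delta)$ gives, using~\eqref{normABC} and unitary invariance,
\[
\|\widehat{W}_i - W_i'\| = \|W_i'(P_i - I)\| = \|P_i - I\| \le \tfrac{1}{2}\delta + O(\delta^2).
\]
Setting $A_i' := W_i' G_i$ produces a polar decomposition, whence $\mathcal{H}(A_i') = G_i$.

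Next I would expand
\[
A_i' - (A_i + \Delta A_i) = W_i' G_i - \widehat{W}_i \widehat{H}_i = -(\widehat{W}_i - W_i')G_i - \widehat{W}_i(\widehat{H}_i - G_i)
\]
and apply~\eqref{normABC} with $\sigma_1(\widehat{W}_i) \le 1 + O(\delta)$ and $\sigma_1(G_i) \le 1 + O(\eta)$ to obtain $\|A_i' - A_i\| \le \tfrac{3}{2}\delta + \|\Delta A_i\| + O(\eta^2)$, so that $\|A_i' - \widetilde{A}_i\| \le \|A_i - \widetilde{A}_i\| + \|\Delta A_i\| + \tfrac{3}{2}\delta + O(\eta^2)$. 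Feeding this into~\eqref{condH} yields
\[
\|G_i - H_i\| \le c_n\bigl(\|A_i - \widetilde{A}_i\| + \|\Delta A_i\| + \tfrac{3}{2}\delta\bigr) + O(\eta^2),
\]
and a final triangle inequality with $\|\widehat{H}_i - G_i\| \le \delta$ closes the argument.

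The main obstacle is securing the constant $\tfrac{3}{2}$ at leading order, which demands the sharper estimate $\|\widehat{W}_i - W_i'\| \le \tfrac{1}{2}\delta$ rather than the $\delta$ a crude square-root bound on $P_i$ would give; this rests on $(P_i + I)^{-1}$ tending to $\tfrac{1}{2}I$ as $\delta \to 0$. A subsidiary point is verifying $\sigma_1(G_i) \le 1 + O(\eta)$, obtained by chaining $G_i \approx \widehat{H}_i$, the relation $\widehat{W}_i \widehat{H}_i = A_i + \Delta A_i$ with $\widehat{W}_i$ nearly isometric, and $\sigma_1(\widetilde{A}_i) \le 1$.
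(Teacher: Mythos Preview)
Your proposal is correct and follows essentially the same route as the paper: choose $G_i$ as the nearest Hermitian positive semidefinite matrix to $\widehat{H}_i$, take the nearest matrix with orthonormal columns to $\widehat{W}_i$ (the paper writes this as $X_i=\argmin_{X^*X=I}\|\widehat{W}_i-X\|$, which coincides with your orthonormal polar factor $W_i'$), recognize their product as a bona fide polar decomposition, and apply the global Lipschitz bound~\eqref{condH} together with the triangle inequality. The only cosmetic differences are that the paper cites \cite[Lemma~8.17]{higham2008functions} for $\|X_i-\widehat{W}_i\|\sim\tfrac12\delta$ whereas you derive it via $P_i-I=(P_i+I)^{-1}(P_i^2-I)$, and the paper splits $X_iG_i-\widehat{W}_i\widehat{H}_i$ as $(X_i-\widehat{W}_i)\widehat{H}_i+X_i(G_i-\widehat{H}_i)$ rather than your $-(\widehat{W}_i-W_i')G_i-\widehat{W}_i(\widehat{H}_i-G_i)$; both rearrangements give the same leading-order constants.
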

\begin{proof}
Let $X_i = \argmin_{X^*X=I} \|\widehat{W}_i-X\|$ and $G_i = \argmin_{G=G^* \ge 0} \|\widehat{H}_i-G\|$.  Then
\begin{align*}
X_i G_i 
&= \widehat{W}_i \widehat{H}_i + (X_i - \widehat{W}_i) \widehat{H}_i + X_i (G_i - \widehat{H}_i) \\
&= \widetilde{A}_i + \Delta \widetilde{A}_i,
\end{align*}
where $\Delta \widetilde{A}_i = (A_i - \widetilde{A}_i) + \Delta A_i + (X_i - \widehat{W}_i) \widehat{H}_i + X_i (G_i - \widehat{H}_i)$.
Since $X_i$ is unitary and $G_i$ is Hermitian positive semidefinite, we have
\begin{align*}
\|G_i-H_i\| 
&= \|\mathcal{H}(\widetilde{A}_i+\Delta\widetilde{A}_i) - \mathcal{H}(\widetilde{A}_i)\| \\
&\le c_n (\|A_i - \widetilde{A}_i\| + \|\Delta A_i\| + \|X_i - \widehat{W}_i\| \sigma_1(\widehat{H}_i) + \|G_i - \widehat{H}_i\|).
\end{align*}
By assumption, $\|G_i-\widehat{H}_i\| \le \delta$ and $\|\widehat{W}_i^* \widehat{W}_i - I\| \le \delta$, so $\|X_i-\widehat{W}_i\| \sim \frac{1}{2}\delta$ as $\delta \rightarrow 0$~\cite[Lemma 8.17]{higham2008functions}.  Moreover, $\sigma_1(\widehat{H}_i) \sim \sigma_1(A_i + \Delta A_i) \le \sigma_1(A+(\Delta A_1^*,\Delta A_2^*)^*) \sim 1$.  These facts, together with the inequality
\[
\|\widehat{H}_i - H_i\| \le \|G_i - \widehat{H}_i\| + \|G_i - H_i\|,
\] 
prove~(\ref{normDeltaH}).
\end{proof}

Now let
\[
B=H_2-H_1 + \mu (I-\widetilde{A}^*\widetilde{A}).
\] 
Then 
\[
\widehat{B} - B = \Delta B_1 + \Delta H_2 - \Delta H_1 + \mu (\widetilde{A}-A)^*A + \mu \widetilde{A}^* (\widetilde{A}-A),
\]
so the preceding lemma implies
\begin{equation} \label{Bdiff}
\begin{split}
\|\widehat{B}-B\| \le \|\Delta B_1\| + c_n(\|A_1-\widetilde{A}_1\| + \|A_2-\widetilde{A}_2\| + \|\Delta A_1\| + \|\Delta A_2\|) \\ + (3c_n+2)\delta + \mu(\sigma_1(A)+1)d(A).
\end{split}
\end{equation}

The forthcoming analysis will rely on a certain pair of functions $f$ and $g$ with the property that $f(B)=H_1$, $g(B)=H_2$, $f$ and $g$ have bounded Fr\'echet derivative at $B$, and $f$ and $g$ are analytic on a complex neighborhood of the spectrum of $B$.  Consider first the case in which $r<n$, so that $\mu>1$ (in our algorithm we always take $\mu=2$ when $r<n$).
Let 
\[
\rho = 
\begin{cases}
\frac{1+\mu}{2} &\mbox{ if } 1 < \mu < 2\sqrt{2}-1, \\
\sqrt{2} &\mbox{ otherwise, }
\end{cases}
\]
and define
\[
f(z) =
\begin{cases}
\frac{1}{2}(-z + \sqrt{2-z^2}), &\mbox{ if } |z|<\rho, \\
0, &\mbox{ if } |z| \ge \rho,
\end{cases}
\]
and
\[
g(z) =
\begin{cases}
\frac{1}{2}(z + \sqrt{2-z^2}), &\mbox{ if } |z|<\rho, \\
0, &\mbox{ if } |z| \ge \rho.
\end{cases}
\]
The functions $f$ and $g$ satisfy $f(\mu)=g(\mu)=0$, and $f(\sin\theta-\cos\theta) = \cos\theta$ and $g(\sin\theta-\cos\theta) = \sin\theta$ for every $\theta \in [0,\frac{\pi}{2}]$.  Moreover, they are analytic on $\mathbb{C} \setminus \{z : |z|=\rho \}$.  This is an open set containing the spectrum of $B$, since the equality
\[
B = V_1 \begin{pmatrix} S_r-C_r & 0 \\ 0 & \mu I_{n-r} \end{pmatrix} V_1^*
\]
shows that $B$ has spectrum contained in $[-1,1] \cup \{\mu\}$.
The functions $f : \mathbb{C}^{n \times n} \rightarrow \mathbb{C}^{n \times n}$ and $g : \mathbb{C}^{n \times n} \rightarrow \mathbb{C}^{n \times n}$ are thus well-defined in a neighborhood of $B$, with 
\[
f(B) = V_1 \begin{pmatrix} f(S_r-C_r) & 0 \\ 0 & f(\mu I_{n-r}) \end{pmatrix} V_1^* 
= V_1 \begin{pmatrix} C_r & 0 \\ 0 & 0_{n-r} \end{pmatrix} V_1^* 
= H_1
\]
and
\[
g(B) 
= V_1 \begin{pmatrix} g(S_r-C_r) & 0 \\ 0 & g(\mu I_{n-r}) \end{pmatrix} V_1^* 
= V_1 \begin{pmatrix} S_r & 0 \\ 0 & 0_{n-r} \end{pmatrix} V_1^* 
= H_2.
\]
Since $B$ is Hermitian and $\sup_{z \in [-1,1] \cup \{\mu\}} |f'(z)| = 1$,
it follows~\cite[Corollary 3.16]{higham2008functions} that the Fr\'echet derivative $L_f(B,\cdot) : \mathbb{C}^{n \times n} \rightarrow \mathbb{C}^{n \times n}$ of $f$ at $B$ satisfies
\begin{equation} \label{Lf}
\|L_f(B,E)\| \le \|E\|
\end{equation}
for every $E \in \mathbb{C}^{n \times n}$.  Similarly,
\begin{equation} \label{Lg}
\|L_g(B,E)\| \le \|E\|
\end{equation}
for every $E \in \mathbb{C}^{n \times n}$.

For the case in which $r=n$, we instead simply define $f(z) = \frac{1}{2}(z+\sqrt{2-z^2})$ and $g(z) = \frac{1}{2}(-z+\sqrt{2-z^2})$.  Arguments analogous to those above show that in this setting (regardless of the value of $\mu$), $f(B)=H_1$, $g(B)=H_2$, and $L_f$ and $L_g$ satisfy~(\ref{Lf}-\ref{Lg}).

We will now show that the backward error $\widehat{W}_1 \widehat{V}_1 \widehat{C} \widehat{V}_1^* - A_1$ is small.  To begin, denote $D = \mathrm{diag}(\mathrm{diag}(\widehat{V}_1^* \widehat{H}_1 \widehat{V}_1))$ and observe that
\begin{equation*}\begin{split}
\widehat{W}_1 \widehat{V}_1 \widehat{C} \widehat{V}_1^*
&= \widehat{W}_1 \widehat{V}_1 \left( D + \Delta C\right) \widehat{V}_1^* \\
&= \widehat{W}_1 \widehat{V}_1 \widehat{V}_1^* \widehat{H}_1 \widehat{V}_1 \widehat{V}_1^* + \widehat{W}_1 \widehat{V}_1 \left( D - \widehat{V}_1^* \widehat{H}_1 \widehat{V}_1 + \Delta C \right) \widehat{V}_1^* \\
&= \widehat{W}_1 \widehat{H}_1 + \widehat{W}_1 (\widehat{V}_1 \widehat{V}_1^* - I) \widehat{H}_1 \widehat{V}_1 \widehat{V}_1^* + \widehat{W}_1 \widehat{H}_1 (\widehat{V}_1 \widehat{V}_1^* - I) \\&\quad +  \widehat{W}_1 \widehat{V}_1 \left( D - \widehat{V}_1^* \widehat{H}_1 \widehat{V}_1 + \Delta C \right) \widehat{V}_1^*. 
\end{split}\end{equation*}
Since $\widehat{W}_1 \widehat{H}_1 = A_1 + \Delta A_1$, it follows that to leading order,
\begin{equation} \label{backwarderror1step1}
\|\widehat{W}_1 \widehat{V}_1 \widehat{C} \widehat{V}_1^* - A_1\| \le \|\Delta A_1\| + 2 \delta \sigma_1(\widehat{H}_1) + \|D - \widehat{V}_1^* \widehat{H}_1 \widehat{V}_1\| + \|\Delta C\|.
\end{equation}
The next lemma estimates $\|D - \widehat{V}_1^* \widehat{H}_1 \widehat{V}_1\|$.

\begin{lemma} \label{lemma:diagdiff}
To leading order,
\[
\|D - \widehat{V}_1^* \widehat{H}_1 \widehat{V}_1\| \le 2 \left( \|B-\widehat{B}\| + \|\Delta B_2\| + (1+\max\{\mu,1\})\delta + \|\Delta H_1\| \right).
\]
\end{lemma}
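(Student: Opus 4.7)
The plan is to compare $M := \widehat{V}_1^* \widehat{H}_1 \widehat{V}_1$ against the diagonal matrix $f(\widehat{\Lambda})$, where $f$ is the scalar function defined just before the lemma (satisfying $f(B) = H_1$). Once I bound $\|M - f(\widehat{\Lambda})\|$, the conclusion follows from the elementary inequality
\[
\|M - \mathrm{diag}(\mathrm{diag}(M))\| \le 2\|M - D'\|
\]
for any diagonal $D'$, which is obtained by applying $\|\mathrm{diag}(\mathrm{diag}(X))\| \le \|X\|$ to $X = M - D'$ together with the triangle inequality.

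I would introduce the auxiliary matrix $\widetilde{B} := \widehat{V}_1 \widehat{\Lambda} \widehat{V}_1^* = \widehat{B} + \Delta B_2$ and telescope
\[
M - f(\widehat{\Lambda}) = \widehat{V}_1^* \Delta H_1 \widehat{V}_1 + \widehat{V}_1^*\bigl[f(B) - f(\widehat{B})\bigr]\widehat{V}_1 + \widehat{V}_1^*\bigl[f(\widehat{B}) - f(\widetilde{B})\bigr]\widehat{V}_1 + \bigl[\widehat{V}_1^* f(\widetilde{B}) \widehat{V}_1 - f(\widehat{\Lambda})\bigr].
\]
For the first three terms I would use $\sigma_1(\widehat{V}_1) = 1 + O(\delta)$ (a direct consequence of $\|\widehat{V}_1^*\widehat{V}_1 - I\| \le \delta$) to reduce them, to leading order, to the inner norms $\|\Delta H_1\|$, $\|B - \widehat{B}\|$, and $\|\widehat{B} - \widetilde{B}\| = \|\Delta B_2\|$, invoking the Fr\'echet-derivative bound (\ref{Lf}) for the latter two. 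That bound extends from $B$ to nearby Hermitian matrices because $f$ is analytic on an open neighborhood of $\mathrm{spec}(B) \subseteq [-1,1] \cup \{\mu\}$.

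The fourth term is the delicate one, because $\widehat{V}_1$ is not exactly unitary and one cannot pull $\widehat{V}_1^*(\cdot)\widehat{V}_1$ inside $f$. To handle it I would let $V_1^\circ$ be the unitary factor in the polar decomposition of $\widehat{V}_1$, so that $\|\widehat{V}_1 - V_1^\circ\| \sim \tfrac{1}{2}\delta$ by~\cite[Lemma~8.17]{higham2008functions}. Then $\widetilde{B} = V_1^\circ \widehat{\Lambda}(V_1^\circ)^* + E'$ with $\|E'\| \le \delta \max\{\mu,1\}$ to leading order, after using (\ref{normABC}) with $\sigma_1(\widehat{\Lambda}) \le \max\{\mu,1\} + o(1)$. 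Since $V_1^\circ$ is unitary, $f(V_1^\circ \widehat{\Lambda}(V_1^\circ)^*) = V_1^\circ f(\widehat{\Lambda})(V_1^\circ)^*$, and (\ref{Lf}) yields $f(\widetilde{B}) = V_1^\circ f(\widehat{\Lambda})(V_1^\circ)^* + O(\|E'\|)$. Finally, the conjugation error $\widehat{V}_1^* V_1^\circ f(\widehat{\Lambda})(V_1^\circ)^* \widehat{V}_1 - f(\widehat{\Lambda})$ expands to first order into $(\widehat{V}_1 - V_1^\circ)^* V_1^\circ f(\widehat{\Lambda}) + f(\widehat{\Lambda})(V_1^\circ)^*(\widehat{V}_1 - V_1^\circ)$, and (\ref{normABC}) bounds each summand by $\tfrac{1}{2}\delta$ using $\sigma_1(V_1^\circ) = 1$ and $\sigma_1(f(\widehat{\Lambda})) \le 1$ (the latter because $|f| \le 1$ on $[-1,1] \cup \{\mu\}$). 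Summing produces the $(1 + \max\{\mu,1\})\delta$ contribution.

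The main obstacle will be keeping the two sources of $\delta$ in the fourth term cleanly separated, so that the sharp coefficient $1 + \max\{\mu,1\}$ (rather than, say, $2\max\{\mu,1\}$) emerges. This hinges on recognizing that $\sigma_1(\widehat{\Lambda})$ enters only when estimating $\|E'\|$, while the complementary $1$ comes from $\sigma_1(f(\widehat{\Lambda})) \le 1$ in the outer conjugation step; (\ref{normABC}) must be applied at each step with the right choice of which factor is measured in the spectral norm.
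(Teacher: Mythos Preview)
Your argument is correct and reaches the same bound with the same constants. The overall strategy matches the paper's: compare $M=\widehat{V}_1^*\widehat{H}_1\widehat{V}_1$ with the diagonal matrix $f(\widehat{\Lambda})$, invoke the Fr\'echet bound~(\ref{Lf}), and finish with the $\|M-\mathrm{diag}(\mathrm{diag}(M))\|\le 2\|M-f(\widehat{\Lambda})\|$ trick.

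The mechanics for the ``fourth term'' differ. You pass to the unitary polar factor $V_1^\circ$ of $\widehat{V}_1$ and split the error into (i) $\|E'\|\le\delta\max\{\mu,1\}$ from $\widetilde{B}-V_1^\circ\widehat{\Lambda}(V_1^\circ)^*$ and (ii) the outer conjugation defect, bounded by $\delta\,\sigma_1(f(\widehat{\Lambda}))\le\delta$. The paper instead makes a single choice $E=(\widehat{B}+\Delta B_2)\widehat{V}_1^{-*}\widehat{V}_1^{-1}-B$, so that $\widehat{V}_1^{-1}(B+E)\widehat{V}_1=\widehat{\Lambda}$ exactly and hence $\widehat{V}_1^*f(B+E)\widehat{V}_1=\widehat{V}_1^*\widehat{V}_1\,f(\widehat{\Lambda})$. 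This collapses your second, third, and fourth terms into one Fr\'echet estimate $\|f(B+E)-f(B)\|\le\|E\|$, with $\|E\|\le\|\widehat{B}-B\|+\|\Delta B_2\|+\delta\,\sigma_1(B)$, plus the residual $(I-\widehat{V}_1^*\widehat{V}_1)f(\widehat{\Lambda})$ contributing $\delta\,\sigma_1(f(\widehat{\Lambda}))$. The paper's route is a bit more economical (one application of~(\ref{Lf}) and no auxiliary polar factor), while yours is more granular and makes the separate $\delta\max\{\mu,1\}$ and $\delta$ contributions very transparent. Both are valid to leading order.
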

\begin{proof}
Since $f(B)=H_1$, we have
\begin{align}
\widehat{V}_1^* \widehat{H}_1 \widehat{V}_1
&= \widehat{V}_1^* f(B) \widehat{V}_1 + \widehat{V}_1^* (\widehat{H}_1-H_1) \widehat{V}_1 \nonumber \\
&= \widehat{V}_1^* \left(f(B)-f(B+E)\right) \widehat{V}_1 + \widehat{V}_1^* f(B+E) \widehat{V}_1 + \widehat{V}_1^* \Delta H_1 \widehat{V}_1, \label{V1H1V1}
\end{align}
for any $E \in \mathbb{C}^{n \times n}$.  Choosing $E = (\widehat{B}+\Delta B_2)\widehat{V}_1^{-*} \widehat{V}_1^{-1} - B$, so that $\widehat{V}_1^{-1}(B+E)\widehat{V}_1 = \widehat{V}_1^{-1} (\widehat{B}+\Delta B_2) \widehat{V}_1^{-*} = \widehat{\Lambda}$, we find that the second term in~(\ref{V1H1V1}) is equal to
\begin{align*}
\widehat{V}_1^* f(B+E) \widehat{V}_1 
&= \widehat{V}_1^* \widehat{V}_1 \widehat{V}_1^{-1} f(B+E) \widehat{V}_1 \\
&= \widehat{V}_1^* \widehat{V}_1 f(\widehat{V}_1^{-1}(B+E) \widehat{V}_1) \\
&= \widehat{V}_1^* \widehat{V}_1 f(\widehat{\Lambda}).
\end{align*}
It follows that
\begin{align*}
f(\widehat{\Lambda}) - \widehat{V}_1^* \widehat{H}_1 \widehat{V}_1
&= \widehat{V}_1^* \left(f(B+E)-f(B)\right) \widehat{V}_1 + (I-\widehat{V}_1^* \widehat{V}_1) f(\widehat{\Lambda}) - \widehat{V}_1^* \Delta H_1 \widehat{V}_1.
\end{align*}
By~(\ref{Lf}), the first term above is bounded by
\begin{align*}
\|\widehat{V}_1^* \left(f(B+E)-f(B)\right) \widehat{V}_1\|
&\le \|E\| \\ 
&= \| (\widehat{B}-B+\Delta B_2)\widehat{V}_1^{-*} \widehat{V}_1^{-1} + B (\widehat{V}_1^{-*} \widehat{V}_1^{-1} - I) \| \\
&\le \|\widehat{B}-B\| + \|\Delta B_2\| +\delta \sigma_1(B)
\end{align*}
to leading order in $\delta$ and $\|E\|$.  
Thus, 
\begin{align*}
\| f(\widehat{\Lambda}) - \widehat{V}_1^* \widehat{H}_1 \widehat{V}_1 \|
&\le \|\widehat{B}-B\| + \|\Delta B_2\| + \delta \sigma_1(B) + \delta \sigma_1(f(\widehat{\Lambda})) + \|\Delta H_1\| \\
&\le \|\widehat{B}-B\| + \|\Delta B_2\| + (1+\max\{\mu,1\})\delta + \|\Delta H_1\|
\end{align*}
where we have used the fact that $\sigma_1(B) \le \max\{\mu,1\}$ and $\sup_{z \in \mathbb{R}\setminus\{\rho\}} |f(z)|=1$, so $\sigma_1(f(\widehat{\Lambda})) \le 1$.
The conclusion follows from the inequality above and the bound
\begin{align*}
\|D - \widehat{V}_1^* \widehat{H}_1 \widehat{V}_1\|
&\le \|D-f(\widehat{\Lambda})\| + \|\widehat{V}_1^* \widehat{H}_1 \widehat{V}_1  - f(\widehat{\Lambda})\| \\
&= \|\mathrm{diag}(\mathrm{diag}(\widehat{V}_1^* \widehat{H}_1 \widehat{V}_1-f(\widehat{\Lambda})))\| + \|\widehat{V}_1^* \widehat{H}_1 \widehat{V}_1 - f(\widehat{\Lambda})\| \\
&\le 2\|\widehat{V}_1^* \widehat{H}_1 \widehat{V}_1 - f(\widehat{\Lambda})\|.
\end{align*}
\end{proof}

The proof of~(\ref{backwarderror1}) is completed by combining Lemma~\ref{lemma:diagdiff} with~(\ref{normDeltaH}),~(\ref{Bdiff}) and~(\ref{backwarderror1step1}), invoking the asymptotic estimates $\sigma_1(A) \sim 1$, $\sigma_1(\widehat{H}_1) \sim 1$, and invoking the inequalities $\|A_1-\widetilde{A}_1\| \le d(A)$, $\|A_2-\widetilde{A}_2\| \le d(A)$.  The proof of~(\ref{backwarderror2}) is almost identical.

\paragraph{Remarks}
Let us recall the discussion in Section~\ref{sec:algorithm}, and reconsider an algorithm based on the eigendecomposition of other choices of $B$, such as $H_1$ or $H_1+H_2$. An attempt to follow the same argument as above to establish stability breaks down, because the resulting functions $f,g$ have unbounded derivatives. 

For later use, we mention now a subtle generalization of Theorem~\ref{thm:backwardstability}.  A careful reading of the proof above shows that~(\ref{backwarderror1}-\ref{backwarderror2}) continue to hold if, for each $i$, the condition $\|\widehat{W}_i^*\widehat{W}_i-I\| \le \delta$ is relaxed to the following pair of conditions:
\begin{align}
\sigma_1(\widehat{W}_i) &= 1 + o(1), \label{relaxedcond1} \\
\|(X_i - \widehat{W}_i) \widehat{H}_i\| &\le \tfrac{1}{2}\delta + o(\eta), \label{relaxedcond2}
\end{align}
where $X_i = \argmin_{X^*X=I} \|\widehat{W}_i-X_i\|$.  In other words, near orthonormality of $\widehat{W}_i$ is not strictly necessary if~(\ref{relaxedcond1}-\ref{relaxedcond2}) can be verified.  This observation will be used in Section~\ref{sec:partial_isometry}.

\section{Algorithms for the polar decomposition and symmetric eigendecomposition} \label{sec:polareig}

A number of iterative algorithms are available for computing the polar decomposition~\cite[\S 8]{higham2008functions}. Among them, the scaled Newton, QDWH and Zolo-pd\footnote{Zolo-pd is observed to be backward stable in experiments; proving it is an open problem.} algorithms are known to be backward stable~\cite{nakatsukasa2012backward,nakatsukasa2016computing}. 

In virtually all algorithms for computing the polar decomposition $A=WH$, the iterates $X_k$ have the same singular vectors as the original $A$, that is, $X_k=Ur_k(\Sigma)V^*$ where $A=U\Sigma V^*$ is the SVD, where $r_k$ is a (odd and usually rational) function. The goal is to find $r_k$ that maps all $\sigma_i(A)$ to 1, as then we have $X_k=UV^*=W$, as required (then $H=W^*A$). Different methods employ different functions $r_k$ to achieve this. 
Here we focus on the Zolo-pd algorithm, as it offers high parallelizability, building upon basic matrix operations (multiplication, QR and Cholesky). 

In Zolo-pd, $r_k(\Sigma)$ is taken to be the type $((2p+1)^k,(2p+1)^k-1)$ best rational approximation to the sign function on $[-\sigma_{\max}(A),-\sigma_{\min}(A)]\cup [\sigma_{\min}(A),\sigma_{\max}(A)]$ (the extremal singular values are estimated unless specified by the user). Such functions are called Zolotarev's functions, and they have the significant property that $r_k$ can be obtained by appropriately composing two Zolotarev functions of types $((2p+1)^{k-1},(2p+1)^{k-1}-1)$ and $(2p+1,2p)$. Combining this fact with a partial fraction representation of rational functions, 
 Zolo-pd  requires just two iterations for convergence in double precision, each iteration involving $p(\leq 8)$ QR or Cholesky factorizations, which can be executed in parallel. For details, see~\cite{nakatsukasa2016computing}. QDWH is a special case where $p=1$, which minimizes the flop count but has the largest number of iterations (and less parallelizability). 

For the symmetric eigenvalue decomposition, 
a spectral divide-and-conquer algorithm~\cite{nakatsukasa2013stable}
 can be developed also based on the polar decomposition. 
The idea is that for a symmetric matrix $B$, the unitary polar factor $W$ 
is equivalent to the matrix sign decomposition, and 
has eigenvalues $\pm 1$ since 
the SVD and eigendecomposition for a symmetric $B$ are related by 
$B = U\Sigma V^* = (US) (S\Sigma) V^* =V\Lambda V^* $
for $S=\mbox{diag}(\pm 1)$, so $U=VS$, and hence  
$W=UV^*=VSV^* =: [V_+\ V_-]\big[
\begin{smallmatrix}
I_{n_+} & \\  & -I_{n-n_+}
\end{smallmatrix}
\big][V_+\ V_-]^*$, where $n_+$ is the number of positive eigenvalues in $B$
(we work with a shifted matrix $B-sI$ so that $n_+\approx n/2$). 
Thus $\frac{1}{2}(W+I) = V_+V_+^*$ 
is a partial isometry onto the eigenspace corresponding to the positive 
eigenvalues of $B$. 
From this we can obtain 
 an orthogonal transformation $\widetilde V:=[V_+\ V_-]$ that block diagonalizes $B$. 
We perform this process recursively on the decoupled diagonal blocks to diagonalze the matrix, resulting in the full eigendecomposition. 
It transpires that the overal process is backward stable if each polar decomposition is computed in a backward stable way, analogous to Theorem~\ref{thm:backwardstability}. 

Of course, classical algorithms for symmetric eigendecomposition based on reduction to tridiagonal form~\cite[Ch.~8]{golub2012matrix} are both effective and stable. 
An advantage of algorithms based on the polar 
decomposition is that they can be implemented in a communication-minimizing manner using only BLAS-3 operations such as matrix multiplication,
QR factorization, and Cholesky decomposition. 


\subsection{Dealing with ill-conditioned matrices} \label{sec:illcond}
The matrices $A_i$ can be ill-conditioned, and this impacts the polar decompositions in lines \ref{line:polar1}-\ref{line:polar2} of Algorithm~\ref{alg:csd}: 
 $W_1 H_1 = A_1$ is ill-conditioned when there exists $\theta_i\approx \tfrac{\pi}{2}$, and  $W_2 H_2 = A_2$ when $\theta_i\approx 0$. When the conditioning is $O(u^{-1})$ or larger, Zolo-pd becomes expensive and even unreliable, as Zolotarev's function of type at most $(17^2,17^2-1)$ may not be enough to map the $O(u)$ singular values to 1. Below we assume the use of double precision arithmetic; the value of $p$ (here $p=8$) will depend on the unit roundoff $u$ (though rather weakly, like $\sqrt{|\log(u)|}$). 


Here we discuss a remedy for such situations.
When computing the polar decompositions, we apply the Zolo-pd algorithm, but working with the narrower interval 
$[-1,-\epsilon]\cup [\epsilon,1]$, 
rather than $[-1,-\sigma_{\min}(A_1)]\cup [\sigma_{\min}(A_1),1]$. We choose $\epsilon$ to be a modest multiple of unit roundoff; here $\epsilon=10^{-15}$. 
The resulting modified Zolo-pd computes 
$\widetilde W_i:=U_ir_2(\Sigma_i)V_1^*$, where 
$r_2$ is the Zolotarev function of type $(17^2,17^2-1)$ on 
$[-1,-\epsilon]\cup [\epsilon,1]$. 
In particular, this gives 
$r_2(x)=1-O(u)$ for $x\in [\epsilon,1]$, and $0\leq r_2(x)\leq 1$ on $[0,1]$. 
It follows that $\widetilde H_i:=\widetilde W_i^*A_i=V_1\Sigma_i r_2(\Sigma_i)V_1^*$ has eigenvalues 
$\lambda_j(\widetilde H_i)\in\sigma_j(A_i)+[-\tilde\epsilon,0]$, where 
$\tilde\epsilon=O(u)$. 
We also have 
\begin{equation}
  \label{eq:hh}
\|\widetilde H_i-H_i\|=\|\Sigma_i r_2(\Sigma_i)-\Sigma_i\|  , 
\end{equation}
 where 
$\Sigma_i r_2(\Sigma_i)-\Sigma_i$ is diagonal with $(j,j)$ element
$a_j:=\sigma_j(A_i) r_2(\sigma_j(A_i))-\sigma_j(A_i)$. 
We claim that $|a_j|=O(u)$ for all $j$: indeed, for $j$ such that 
$\sigma_j(A_i)>\epsilon$, we have $r_2(\sigma_j(A_i))=1-O(u)$, so 
$|a_j|=O(u)$. If $\sigma_j(A_i)\leq \epsilon$, then 
$|a_j|\leq |\sigma_j(A_i)|\leq \epsilon=O(u)$. 
Together with~\eqref{eq:hh} we obtain $\|\widetilde H_i-H_i\|=O(u)$. 
Moreover, we have $\|\widetilde W_i\widetilde H_i-A\|=\|r_2(\Sigma_i)^2\Sigma_i-\Sigma_i\|$, which is also $O(u)$ by a similar argument. Summarizing, we have
\begin{equation}  \label{eq:HHWH}
\|\widetilde H_i-H_i\|=O(u),\qquad  \|\widetilde W_i\widetilde H_i-A\|=O(u)  , 
\end{equation}
 even though $\widetilde W_i\widetilde H_i$ is not a polar decomposition since $\widetilde W_i$ is not orthogonal. 

In view of the first equation in~\eqref{eq:HHWH}, we proceed to lines 3--5 in Algorithm~\ref{alg:csd}, which gives results that are backward stable. The only issue lies in lines 6--7, where we would compute $\widetilde W_i V_1$: since $\widetilde W_i$ does not have orthonormal columns, neither does $\widetilde W_i V_1$. 

To overcome this issue, we make the following observation: the $R$-factors in the QR factorizations of $A_i$ and $H_i$ are identical, 
that is, $A_i=Q_iR_i$ and $H_i=Q_{i,H}R_i$
 (we adopt the convention that the diagonal elements of $R$ are nonnegative; this makes the QR factorization unique~\cite[Section 5.2.1]{golub2012matrix}). It follows that 
in the QR and polar decompositions $A_i=Q_iR_i=W_iH_i$, we have the relation 
$W_i=Q_iQ_{i,H}^*$. 

Recalling from~\eqref{eq:HHWH} that $\widetilde H_i=H_i+O(u)$, this suggests the following: before line 6 of Algorithm~\ref{alg:csd}, compute the QR factorizations 
$A_i=Q_iR_i$ and $\widetilde H_i=\widetilde Q_{i,H}\widetilde R_i$, and 
redefine $W_i:=Q_i\widetilde{Q}_{i,H}^*$. 
We summarize the process in Algorithm~\ref{alg:csdillcond}. 

\begin{breakablealgorithm} \label{alg:csdillcond}
\caption[CS decomposition, ill-conditioned case]{Modification to Algorithm~\ref{alg:csd} when $A_1$ and/or $A_2$ is ill-conditioned}
\begin{algorithmic}[1]
\STATE{In place of lines 1--2 of Algorithm~\ref{alg:csd}, 
compute $A_i\approx \widetilde W_i \widetilde H_i$, obtained by a modified Zolo-pd mapping the interval $[-1,-\epsilon]\cup [\epsilon,1]$ to 1 ($\epsilon=10^{-15}$ in double precision)}
\STATE{Compute QR factorizations $A_i=Q_iR_i$ and 
$\widetilde H_i=\widetilde Q_{i,H}\widetilde R_i$} 
\STATE{Set $W_i=Q_i\widetilde{Q}_{i,H}^*$, $H_i=\widetilde H_i$}
\STATE{Proceed with lines 3 onwards of Algorithm~\ref{alg:csd}}
\end{algorithmic}
\end{breakablealgorithm}\vspace{0.05in}

In practice, since it is usually unknown a priori if $A_i$ is ill-conditioned, we execute Zolo-pd as usual, in which one of the preprocessing step is to estimate $\sigma_{\min}(A_i)$: if it is larger than $10^{-15}$, we continue with standard Zolo-pd; otherwise $A_i$ is ill conditioned, and we run Algorithm~\ref{alg:csdillcond} (this is necessary only for $i$ for which $A_i$ is ill conditioned). 

An important question is whether this process is stable. 
Since 
$\|\widetilde H_i-H_i\|=O(u)$ by \eqref{eq:HHWH} and 
the resulting $W_i$ is orthogonal to working precision by construction, 
the main question is whether $W_i\widetilde H_i$ still gives a backward stable polar decomposition for $A_i$, that is, whether $\| W_i\widetilde H_i-A\|=O(u)$ holds or not.
To examine this, note that 
 \begin{align*}
\| W_i\widetilde H_i-A\|&=
\| Q_i\widetilde Q_{i,H}^*\widetilde H_i-Q_iR_i\|=
\| Q_i\widetilde R_i-Q_iR_i\|=\| \widetilde R_i-R_i\|. 
 \end{align*}
 The question therefore becomes whether $\| \widetilde R_i-R_i\|=O(u)$
 holds (note that this is an inexpensive condition to check).  In general, the triangular factor in the QR factorization can
 be ill conditioned; however, it is known to be usually much better
 conditioned than the original
 matrix~\cite{chang1997perturbation}. Indeed in all our experiments
 with ill-conditioned $A_i$, we observed that
 $\| \widetilde R_i-R_i\|$ was a small multiple of unit roundoff, 
 indicating Algorithm~\ref{alg:csdillcond} is an effective workaround for ill-conditioned problems. 

In the rare event that $\|\widetilde R_i-R_i\|$ is unacceptably large, 
a conceptually simple and robust (but expensive) workaround is to compute the polar decompositions via the SVD, which is unconditionally backward stable.

\subsection{Dealing with rank deficiency} \label{sec:partial_isometry}
When $A$ is a rank deficient partial isometry ($r<n$) rather than having orthonormal columns, a natural goal is to compute the economical decomposition \eqref{eq:partCSecon}, as it saves memory requirement and allows efficient operations such as multiplications.

Recall that the rank $r$ of $A$ can be computed via $\|A\|_F\approx \sqrt{r}$; here we assume that $r<n$. When $r<n$, both $A_1$ and $A_2$ are singular. 
As described in~\cite{nakatsukasa2013stable}, 
QDWH and Zolo-pd are capable of computing the canonical polar decomposition, and in exact arithmetic it computes $A_i=W_iH_i$ where $W_i$ are partial isometries. In finite-precision arithmetic, however, roundoff error usually causes the zero singular values of $A_i$ to get mapped to nonzero values. These eventually converge to 1 in QDWH (in six iterations), but Zolo-pd, which terminates in two iterations, faces the same difficulty discussed above, 
usually producing a $W_i$ that has $n-r$ singular values that are between 0 and 1. Then the computed $\widehat W_i$ does not have singular values that are all close to 0 or 1, and hence neither does the resulting $U_i$. Here we discuss a modification of Zolo-pd to deal with such issues. 

The first step identical to the previous subsection: 
we invoke the modified Zolo-pd to map singular values in $[\epsilon,1]$ to 1. Recall that the resulting $H_i$ are correct to working precision. 
We then compute $B = H_2-H_1 + \mu(I-A^*A)$ with $\mu=2$, 
and its eigendecomposition $B=V_1 \Lambda V_1^*$ as usual, and proceed as described in the second-to-last remark after Algorithm~\ref{alg:csd} to extract the relevant matrices. 
We summarize the process in Algorithm~\ref{alg:csdrankdef}. 

\begin{breakablealgorithm} \label{alg:csdrankdef}
\caption[CS decomposition, rank-deficient case]{Modification to Algorithm~\ref{alg:csd} when $A$ is rank deficient}
\begin{algorithmic}[1]
\STATE{Compute $A_i\approx \widetilde W_i \widetilde H_i$ for $i=1,2$, obtained by a modified Zolo-pd mapping the interval $[-1,-\epsilon]\cup [\epsilon,1]$ to 1
}\label{line:pdrankdef}
\STATE{$W_i =\widetilde W_i,  H_i= \widetilde H_i$ } \label{line:Wirankdef}
\STATE{$B = H_2-H_1 + 2(I-A^*A)$} \label{line:Brankdef}
\STATE{$V_1 \Lambda V_1^* = B$ (symmetric eigendecomposition)} \label{line:eigrankdef}
\STATE{Find $V_{1r}\in\mathbb{C}^{n \times r}$, the eigenvectors corresponding to eigenvalues in $[-1,1]$} \label{line:eigpartrankdef}
\STATE{$U_1 = W_1 V_{1r}$} \label{line:U1rankdef}
\STATE{$U_2 = W_2 V_{1r}$} \label{line:U2rankdef}
\STATE{$C = \mathrm{diag}(\mathrm{diag}(V_{1r}^* H_1 V_{1r}))$} \label{line:Crankdef}
\STATE{$S = \mathrm{diag}(\mathrm{diag}(V_{1r}^* H_2 V_{1r}))$} \label{line:Srankdef}
\RETURN $U_1,U_2,C,S,V_1:=V_{1r}$
\end{algorithmic}
\end{breakablealgorithm}\vspace{0.05in}

Note that the modified Zolo-pd is used in exactly the same way 
in Algorithms~\ref{alg:csdillcond} and \ref{alg:csdrankdef}. 
This lets us easily treat the situation where both issues are present: 
$A$ is rank-deficient and $C$ or $S$ is ill conditioned. 
In this case, we 
replace line 2 of Algorithm~\ref{alg:csdrankdef} by lines 2--3 of Algorithm \ref{alg:csdillcond}.
We also note that in spectral divide-and-conquer algorithms such as Zolo-eig, it is straightforward to modify the algorithm to compute only eigenpairs lying in the prescribed interval $[-1,1]$ (by splitting e.g. at $1.1$), thus saving some cost. 

A nontrivial question here is: does each $U_i$ have orthonormal columns? This is not obvious because $W_i$ is not orthonormal. 
To examine this, we momentarily abuse notation by writing $U_i$ as $U_{i,r}$ to distinguish it from the $U_i$ from earlier sections ($U_{i,r}$ consists of the first $r$ columns of $U_i$).  
Now recall that $W_i=U_ir_2(\Sigma_i)V_{1}^*$, which we rewrite as 
\[
W_i=[U_{i,r},U_{i,r}^\perp]
\begin{pmatrix}
r_2(\Sigma_{i,r})  & \\
  & r_2(\Sigma_{i,>r})
\end{pmatrix}[V_{1r},V_{1r}^\perp]^*
\]
where $\Sigma_{i,r}$ are the $r$ (nonzero) leading singular values of $A_i$. 
Assuming that they are larger than $\epsilon$ (when this is violated we invoke Algorithm~\ref{alg:csdillcond} as mentioned above), 
we have 
$r_2(\Sigma_{i,r})=I_r+O(u)$. Therefore 
$W_iV_{1r}=U_{i,r}+O(u)$, which is orthonormal to working precision, as required. 
Our experiments illustrate that $U_i$ are indeed orthonormal to working precision. 


A second question that must be addressed is whether Algorithm~\ref{alg:csdrankdef} is still backward stable, given that $\|W_i^* W_i - I\|$ may be much greater than $u$.  To answer this question, we recall the remark made at the end of Section~\ref{sec:backwardstability}: backward stability is still ensured if $\sigma_1(W_i) \le 1+O(u)$ and $\|(X_i-W_i)H_i\| = O(u)$, where $X_i = \argmin_{X^*X=I} \|X-W_i\|$. The first of these conditions is clearly satisfied.  For the second, observe that $H_i = W_i^* A = V_1 r_2(\Sigma_i) \Sigma_i V_1^*$ and by~\cite[Theorem 8.4]{higham2008functions}, $X_i=U_i V_1^*$.  Thus,
\begin{align*}
(X_i-W_i)H_i 
&= U_i (I-r_2(\Sigma_i)) r_2(\Sigma_i) \Sigma_i V_1^* \\
&= U_i
\begin{pmatrix}
(I_r - r_2(\Sigma_{i,r})) r_2(\Sigma_{i,r}) \Sigma_{i,r}  & \\
  & (I_{n-r} - r_2(\Sigma_{i,>r})) r_2(\Sigma_{i,>r}) \Sigma_{i,>r}
\end{pmatrix}V_1^*.
\end{align*}
In the block diagonal matrix above, the upper left block is $O(u)$ because $\|I_r - r_2(\Sigma_{i,r})\| = O(u)$ and $\|\Sigma_{i,r}r_2(\Sigma_{i,r})\|\leq 1+O(u)$, whereas the lower right block is $O(u)$ because $\|\Sigma_{i,>r}\| = O(u)$ and $\|(I_{n-r} - r_2(\Sigma_{i,>r})) r_2(\Sigma_{i,>r})\|\leq 1+O(u)$.  We conclude that $\|(X_i-W_i)H_i\| = O(u)$, as desired.

We emphasize that all of the arguments above hinge upon the assumption that $r_2(A_i)$ is computed in a backward stable manner in the Zolo-pd algorithm.  This is supported by extensive numerical evidence but not yet by a proof~\cite{nakatsukasa2016computing}.

\section{Numerical examples} \label{sec:numerical}

\begin{table}[t]
\centering
\pgfplotstabletypeset[
every head row/.style={
before row={\toprule
 &  &  &  &\multicolumn{4}{c|}{$\frac{\|\widehat{A}-A\|_2}{d(A)}$} &\multicolumn{4}{c|}{$\frac{\|\widehat{U}_1^*\widehat{U}_1-I\|_2}{u}$} &\multicolumn{4}{c|}{$\frac{\|\widehat{U}_2^*\widehat{U}_2-I\|_2}{u}$} &\multicolumn{4}{c|}{$\frac{\|\widehat{V}_1^*\widehat{V}_1-I\|_2}{u}$}  \\},
after row=\midrule},
every nth row={10}{before row=\midrule},
every last row/.style={after row=\bottomrule},
columns={leftcol,0,1,2,3,4,5,6,7,8,9},
create on use/leftcol/.style={column name={Test class},create col/set list={\multirow{10}{*}{1},\multirow{29}{*}{1'},\multirow{48}{*}{2},\multirow{67}{*}{2'}}},
columns/leftcol/.style={string type,column type/.add={|}{|},column name={Test}},
columns/0/.style={precision=1,verbatim,column type/.add={}{|},column name={$n$}},
columns/1/.style={dec sep align={c|},sci,sci 10e,sci zerofill,precision=1,column type/.add={}{|},column name={$d(A)$}}, 
columns/2/.style={column type={c},dec sep align,zerofill,column type/.add={}{},column name={Z}}, 
columns/3/.style={column type={c},dec sep align={c|},zerofill,column type/.add={}{|},column name={L}},
columns/4/.style={column type={c},dec sep align,zerofill,column type/.add={}{},column name={Z}}, 
columns/5/.style={column type={c},dec sep align={c|},zerofill,column type/.add={}{|},column name={L}},
columns/6/.style={column type={c},dec sep align,zerofill,column type/.add={}{},column name={Z}}, 
columns/7/.style={column type={c},dec sep align={c|},zerofill,column type/.add={}{|},column name={L}},
columns/8/.style={column type={c},dec sep align,zerofill,column type/.add={}{},column name={Z}}, 
columns/9/.style={column type={c},dec sep align={c|},zerofill,column type/.add={}{|},column name={L}},
]
{Data/fullrank.dat}
\caption{Residuals and orthogonality measures for Zolo-csd (Z) and LAPACK (L) on the test matrices~(\ref{test1}),~(\ref{test1}'),~(\ref{test2}), and~(\ref{test2}').}
\label{tab:fullrank}
\end{table}

\begin{table}[t]
\centering
\pgfplotstabletypeset[
every head row/.style={
output empty row,
before row={\toprule
Test & $n$ & \multicolumn{2}{c|}{$d(A)$} &\multicolumn{2}{c|}{$\frac{\|\widehat{A}-A\|_2}{d(A)}$} &\multicolumn{2}{c|}{$\frac{\|\widehat{U}_1^*\widehat{U}_1-I\|_2}{u}$} &\multicolumn{2}{c|}{$\frac{\|\widehat{U}_2^*\widehat{U}_2-I\|_2}{u}$} &\multicolumn{2}{c|}{$\frac{\|\widehat{V}_1^*\widehat{V}_1-I\|_2}{u}$}  \\},
after row=\midrule},
every nth row={10}{before row=\midrule},
every last row/.style={after row=\bottomrule},
columns={leftcol,0,1,2,3,4,5},
create on use/leftcol/.style={column name={Test class},create col/set list={\multirow{10}{*}{3},\multirow{29}{*}{3'},\multirow{48}{*}{4},\multirow{67}{*}{4'}}},
columns/leftcol/.style={string type,column type/.add={|}{|}},
columns/0/.style={precision=1,verbatim,column type/.add={}{|}},
columns/1/.style={dec sep align={c|},sci,sci 10e,sci zerofill,precision=1,column type/.add={}{|}}, 
columns/2/.style={column type={c},dec sep align={c|},fixed zerofill,column type/.add={}{|}}, 
columns/3/.style={column type={c},dec sep align={c|},fixed zerofill,column type/.add={}{|}},
columns/4/.style={column type={c},dec sep align={c|},fixed zerofill,column type/.add={}{|}}, 
columns/5/.style={column type={c},dec sep align={c|},fixed zerofill,column type/.add={}{|}}
]
{Data/rankdef.dat}
\caption{Residuals and orthogonality measures for Zolo-csd on the test matrices~(\ref{test3}),~(\ref{test3}'),~(\ref{test4}), and~(\ref{test4}').}
\label{tab:rankdef}
\end{table}

We tested Algorithm~\ref{alg:csd} on the following examples adapted from~\cite{sutton2013divide}.  Below, $\mathrm{nint}(x)$ denotes the nearest integer to a real number $x$, and $St(n,m) = \{A \in \mathbb{C}^{m \times n} \mid A^*A=I\}$ denotes the complex Stiefel manifold.
\begin{enumerate}
\item \label{test1} (Haar) A $2n \times n$ matrix sampled randomly from the Haar measure on $St(n,2n)$.
\item \label{test2} (Clustered) A $2n \times n$ matrix $A = \left( \begin{smallmatrix} U_1 C V_1^* \\ U_2 S V_1^* \end{smallmatrix} \right)$, where $U_1,U_2,V_2 \in \mathbb{C}^{n \times n}$ are sampled randomly from the Haar measure on $St(n,n)$, and $C$ and $S$ are generated with the following MATLAB commands:
\begin{lstlisting}[style=Matlab-editor]
delta = 10^(-18*rand(n+1),1);
theta = pi/2*cumsum(delta(1:n))/sum(delta);
C = diag(cos(theta)); 
S = diag(sin(theta));
\end{lstlisting}
This code tends to produce principal angles $\theta_1,\theta_2,\dots,\theta_n$ that are highly clustered.
\item \label{test3} (Rank-deficient, Haar) A $2n \times n$ matrix of rank $r=\mathrm{nint}(3n/4)$ given by $A=XY^*$, where $X \in \mathbb{C}^{2n \times r}$ and $Y \in \mathbb{C}^{n \times r}$ are sampled randomly from the Haar measure on $St(r,2n)$ and $St(r,n)$, respectively.
\item \label{test4} (Rank-deficient, clustered) A $2n \times n$ matrix of rank $r=\mathrm{nint}(3n/4)$ generated in the same way as in (2), but with $C_{ii}$ and $S_{ii}$ replaced by zero for $n-r$ random indices $i \in \{1,2,\dots,n\}$.
\item[1'-4'.] (Noisy) Tests (1-4), each perturbed by \verb$1e-10*(randn(2*n,n)+i*randn(2*n,$\newline\verb$n))$.
\end{enumerate}

We ran these tests with $n = \mathrm{nint}(30 \cdot 2^{j/2})$, $j=0,1,\dots,9$.  In all of the tests, we performed the post-processing procedure suggested in Remark~(\ref{postprocessing}).

Tables~\ref{tab:fullrank}-\ref{tab:rankdef} report the scaled residuals $\frac{\|\widehat{A}-A\|_2}{d(A)}$ and scaled orthogonality measures $\frac{\|\widehat{U}_1^*\widehat{U}_1-I\|_2}{u}$, $\frac{\|\widehat{U}_2^*\widehat{U}_2-I\|_2}{u}$, $\frac{\|\widehat{V}_1^*\widehat{V}_1-I\|_2}{u}$ for each test, where $\widehat{A}= \left( \begin{smallmatrix} \widehat{U}_1 \widehat{C} \widehat{V}_1^* \\ \widehat{U}_2 \widehat{S} \widehat{V}_1^* \end{smallmatrix} \right)$, $d(A)$ is given by~(\ref{distpartialiso}), and $u=2^{-53}$ is the unit roundoff.  For comparison, the results obtained with LAPACK's csd function are recorded in Table~\ref{tab:fullrank} as well.  (Results from LAPACK are not shown in Table~\ref{tab:rankdef}, since LAPACK's csd function applies only to full-rank matrices.)   Inspection of Table~\ref{tab:fullrank} reveals that in most of the tests involving full-rank $A$, the residuals and orthogonality measures were closer to zero for Zolo-csd than for LAPACK.

\end{document}